\newtheorem{thm}{Theorem}[section]
\newtheorem{lem}[thm]{Lemma}
\newtheorem{cor}[thm]{Corollary}
\newtheorem{defn}[thm]{Definition}
\newtheorem{rmk}[thm]{Remark}
\newcommand{\thmref}[1]{Theorem~\ref{#1}}
\newcommand{\lemref}[1]{Lemma~\ref{#1}}
\newcommand{\rmkref}[1]{Remark~\ref{#1}}
\newcommand{\Z}{\mathbb{Z}}
\newcommand{\Q}{\mathbb{Q}}
\newcommand{\R}{\mathbb{R}}
\newenvironment{acknowledgements}{\bigskip\textbf{Acknowledgements.}}{}
\begin{document}

	\title{On Signs of Hecke Eigenvalues of Siegel eigenforms}
	
	\author{Arvind Kumar, Jaban Meher and Karam Deo Shankhadhar}
	
\address[Arvind Kumar]
	{{Department of Mathematics, Indian Institute of Technology Jammu, Jagti NH-44, PO Nagrota, Jammu 181221, India.}}
\email{arvind.kumar@iitjammu.ac.in}
	
	\address[Jaban Meher]{School of Mathematical Sciences, 
		National Institute of Science Education and Research, Bhubaneswar, An OCC of Homi Bhabha National Institute, P. O. Jatni, Khurda 752050, Odisha, India.}
	\email{jaban@niser.ac.in}
	
	\address[Karam Deo Shankhadhar]{Department of Mathematics,
		Indian Institute of Science Education and Research
		Bhopal, Bhopal Bypass Road, Bhauri, Bhopal 462 066, Madhya Pradesh, India.}
	\email{karamdeo@iiserb.ac.in}
	
	\subjclass[2020]{Primary 11F46, 11F80; Secondary 11F30, 11F60}
	
	
	\keywords{Siegel modular forms, Galois representations, Hecke eigenvalues, simultaneous sign change}
	
	\begin{abstract}
		In this article, we distinguish Siegel cuspidal eigenforms of degree two on the full symplectic group from the signs of their Hecke eigenvalues. To establish our theorem, we obtain a result towards simultaneous sign changes of eigenvalues of two Siegel eigenforms. In the course of the proof, we also prove that the Satake $p$-parameters of two different Siegel eigenforms are distinct for a set of primes $p$ of density 1. The main ingredient to prove the latter result is the theory of Galois representations attached to Siegel eigenforms. 
	\end{abstract}
	
	\maketitle
	\section{Introduction}
	 One of the fundamental and interesting problems in the theory of automorphic forms is {\em whether the set of 
	 { Hecke} eigenvalues  determine the eigenform under consideration?} This question is well studied for elliptic modular eigenforms and there are many results available in the literature. But in the case of Siegel cuspidal eigenforms, this was a long-standing unanswered problem and only recently in 2018, Schmidt \cite{sch} has given an affirmative answer for normalized eigenvalues. 
	To make our statements more concrete, let us introduce some notation. Let $S_k(\Gamma_2)$ be the space of Siegel cusp forms of weight $k$ for the symplectic group $\Gamma_2:= {\rm Sp}_4(\Z)$. 
	For an eigenform  $F\in S_k(\Gamma_2)$, we denote the $n$-th eigenvalue by $\mu_F(n)$ and the $n$-th normalized eigenvalue by $\lambda_F(n):=n^{3/2-k}\mu_F(n)$.
	Then Schmidt {\em loc. cit.} has proved that if $F\in S_{k_1}(\Gamma_2)$ and $G\in S_{k_2}(\Gamma_2)$ 
	are eigenforms 
	such that for all but finitely many primes $p$
	$$
	\lambda_F(p)=\lambda_G(p) ~~~{\rm and}~~~	\lambda_F(p^2)=\lambda_G(p^2),
	$$
	then $k_1 = k_2$ and $F$ is a constant multiple of $G$. In the literature, this kind of result is known as the multiplicity one theorem. The result of Schmidt has been improved significantly by the authors of this paper in \cite{kms} 
	by proving a strong multiplicity one result which essentially shows that any set	of eigenvalues (normalized or non-normalized) at primes $p$ of positive upper density are sufficient
	to distinguish the Siegel cuspidal eigenform. 
	

	 
	Here is another and more refined question one can ask: 
	\begin{equation}\label{question}
		\text{\em To what extent the signs of the eigenvalues determine
			an automorphic form uniquely?}
	\end{equation} 
For elliptic modular forms, this problem 
was first studied by Kowalski {\em et al.} \cite{klsw} by using a very deep result of Ramakrishnan stating that the Rankin-Selberg convolution $L$-function associated to two elliptic newforms is the $L$-function of some  ${\rm GL}_4$-form. Soon after, 
Matom\"aki \cite{mat} refined some of the results of \cite{klsw}. Loosely speaking, she has proved that if the signs of  $p$-th Hecke eigenvalues of two non-CM elliptic newforms  agree on a set of primes $p$ of analytic density greater than $19/25$, then the two forms are the same. 

In this article, we investigate the  question (\ref{question}) in the case of Siegel cuspidal eigenforms. Let us first recall some facts about the signs of their Hecke eigenvalues to put things in order. If $F$ is a Siegel eigenform with normalized eigenvalues $\lambda_F(n)$, then we know that $\lambda_F(n)$ are real numbers for all $n \ge 1.$ By the works of Breulmann \cite{bre} and Kohnen \cite{kohnen}, one knows that  $F$ is a Saito-Kurokawa lift if and only if   $\lambda_F(n)>0$ for all $n\ge 1$. Therefore one should discuss about the signs of $\lambda_F(n)$ only if $F$ is not a Saito-Kurokawa lift. 
Let us denote the subspace of non-Saito-Kurokawa lifts inside $S_k(\Gamma_2)$ by $S_k^\perp(\Gamma_2)$.
 By using representation theoretic techniques and a weaker result than the generalized Ramanujan conjecture, Pitale and Schmidt \cite{pisc} have shown that for any Siegel eigenform $F\in S_k^\perp(\Gamma_2)$, there are infinitely many primes $p$ such that the sequence $\{\lambda_F(p^r)\}_{r \ge 0}$ has infinitely many sign changes.  This result has been strengthened by Das and Kohnen \cite[Theorem 1.1]{dm} by using a completely different method. They have proved that for any positive integer $j$ with $4\nmid j$, there exists a set of primes of natural density 1 such that for any prime $p$ in that set, the sequence  $\{\lambda_F(p^{jr})\}_{r\ge 0}$  has infinitely many sign changes. 
 
{ Next we ask that if given any two distinct} non-Saito-Kurokawa Siegel eigenforms $F$ and $G$, there exists at least one integer $m\ge 1$ such that the sign of $\lambda_F(m)$ is different from the sign of $\lambda_G(m)$.  
{ Unlike} the case of elliptic modular forms, there is no unconditional result known till now about such simultaneous sign change.   
We first prove the following result on the simultaneous sign change of eigenvalues of two distinct Siegel eigenforms.

	\begin{thm}\label{prime_power}
		Let $F \in S_{k_1}^\perp(\Gamma_2)$ and $G \in S_{k_2}^\perp(\Gamma_2)$ be Siegel eigenforms  with normalized eigenvalues $\lambda_F(n)$ and $\lambda_G(n)$, respectively. We assume that they are not a constant multiple of each other. Then there exists a set of primes of density $1$ such that for any prime $p$ in that set, the sequence $\{\lambda_F(p^r)\lambda_G(p^r)\}_{r\ge 0}$ has infinitely many sign changes.
	\end{thm}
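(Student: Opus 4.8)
\emph{Sketch of the proof.} The plan is to fix a prime $p$, rewrite the two sequences $\{\lambda_F(p^{r})\}_{r\geq0}$ and $\{\lambda_G(p^{r})\}_{r\geq0}$ as explicit exponential sums in the variable $r$, and then read off the sign behaviour of their termwise product. By Andrianov's identity the generating series $\sum_{r\geq0}\mu_F(p^{r})X^{r}$ equals $(1-p^{2k_1-4}X^{2})$ divided by the spin Euler factor of $F$ at $p$; after the normalisation $\lambda_F(n)=n^{3/2-k_1}\mu_F(n)$ this becomes
\[
\sum_{r\geq0}\lambda_F(p^{r})X^{r}=\frac{1-p^{-1}X^{2}}{(1-2\cos\theta_{p}X+X^{2})\,(1-2\cos\eta_{p}X+X^{2})},
\]
where $e^{\pm i\theta_{p}},e^{\pm i\eta_{p}}$ are the normalised spin Satake parameters of $F$ at $p$; these have absolute value $1$ because $F\in S_{k_1}^{\perp}(\Gamma_2)$ is not a Saito--Kurokawa lift, so Weissauer's Ramanujan bound applies, and they occur in the two inverse pairs visible above. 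Since the numerator does not vanish on $|X|=1$, a partial fraction decomposition shows that for $p$ outside the density-zero set of primes with $\theta_{p}=\eta_{p}$ or $\theta_{p},\eta_{p}\in\{0,\pi\}$, the sequence $\lambda_F(p^{r})$ is a non-zero real exponential sum $\sum_{a}c_{a}\,e^{ir\vartheta_{a}}$ with all $c_{a}\neq0$ and the four frequencies $\vartheta_{a}$ running over $\{\pm\theta_{p},\pm\eta_{p}\}$; in particular it is bounded in $r$. The same holds for $G$, with angles $\pm\theta'_{p},\pm\eta'_{p}$.

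Multiplying and collecting by frequency, $\lambda_F(p^{r})\lambda_G(p^{r})=\sum_{\nu}w_{\nu}\,e^{ir\nu}$ is again a bounded real exponential sum, with distinct frequencies drawn from $\{\pm\theta_{p}\pm\theta'_{p},\,\pm\theta_{p}\pm\eta'_{p},\,\pm\eta_{p}\pm\theta'_{p},\,\pm\eta_{p}\pm\eta'_{p}\}$, and it is not identically zero since its value at $r=0$ is $\lambda_F(1)\lambda_G(1)=1$. The constant term $w_{0}$, which is the mean value of the sequence, can be non-zero only if some $\vartheta_{a}+\vartheta'_{b}$ vanishes, i.e. if $e^{i\vartheta_{a}}=e^{-i\vartheta'_{b}}$ for some $a,b$; since the parameter set of $G$ is stable under $z\mapsto z^{-1}$, this happens exactly when $F$ and $G$ share a spin Satake parameter at $p$. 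If they do not, then $w_{0}=0$, and a bounded non-zero real exponential sum with vanishing mean value changes sign infinitely often: were it eventually $\geq0$, its Ces\`aro means would tend to $0$ and hence, by boundedness, so would the Ces\`aro means of its square, contradicting the fact that the latter tend to $\sum_{\nu}|w_{\nu}|^{2}>0$ (equivalently, Pringsheim's theorem applied to the rational function $\sum_{r}\lambda_F(p^{r})\lambda_G(p^{r})X^{r}$, whose radius of convergence is $1$ but which is now regular at $X=1$, rules out eventual non-negativity); eventual non-positivity is symmetric. Thus \thmref{prime_power} follows once we prove:
\begin{center}\emph{for a set of primes $p$ of density $1$, the spin Satake parameters of $F$ at $p$ are disjoint from those of $G$ at $p$.}\end{center}

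This last statement, rather than the elementary analysis above, is the heart of the matter, and to prove it I would bring in the theory of Galois representations. Attach to $F$ and $G$ their four-dimensional $\ell$-adic spin representations $\rho_{F},\rho_{G}$ of $\mathrm{Gal}(\overline{\Q}/\Q)$, so that the eigenvalues of $\rho_{F}(\mathrm{Frob}_{p})$ and of $\rho_{G}(\mathrm{Frob}_{p})$ are precisely the unnormalised spin Satake parameters of $F$ and of $G$ at $p$. Put $\sigma:=\rho_{G}\otimes\chi_{\mathrm{cyc}}^{\,k_1-k_2}$; then $F$ and $G$ having a common normalised Satake parameter at $p$ is equivalent to $\det\bigl(I-\rho_{F}(\mathrm{Frob}_{p})\otimes\sigma^{\vee}(\mathrm{Frob}_{p})\bigr)=0$. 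Since $F$ and $G$ are not scalar multiples of one another, the multiplicity one theorem for Siegel eigenforms (Schmidt, together with \cite{kms}), combined with the fact that eigenforms in $S_{k}^{\perp}(\Gamma_2)$ at full level are of general type, gives $\rho_{F}\not\cong\sigma$, both being irreducible. Feeding in the known structure of the images of these representations --- in particular that $\rho_{F}$ and $\rho_{G}$ have large image and no exceptional self-twists or endomorphisms, so that $\rho_{F}$ and $\sigma$ stay ``unlinked'' --- one checks that $\rho_{F}\otimes\sigma^{\vee}$ does not admit $1$ as a universal Frobenius eigenvalue; hence the locus $\det(I-\,\cdot\,)=0$ is a proper Zariski-closed subset of the Zariski closure of the image, and the Chebotarev density theorem forces the exceptional set of primes to have density $0$ (the density-zero degeneracy loci from the first paragraph are handled by the same mechanism). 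I expect this to be the main obstacle: identifying the image of $\rho_{F}\times\rho_{G}$ precisely enough to rule out CM-type and other coincidences, and making the ``universal eigenvalue'' count rigorous. Granting it, the three paragraphs combine to prove \thmref{prime_power}.
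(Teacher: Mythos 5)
Your overall architecture coincides with the paper's: an elementary analytic step showing that, for a fixed prime $p$ with unit Satake parameters, sign changes can fail only if $\beta_{i,p}\delta_{j,p}=1$ for some $i,j$ (equivalently, since the parameter set of $G$ is stable under inversion, only if $F$ and $G$ share a normalized Satake parameter at $p$), followed by a Galois-theoretic step showing that this coincidence occurs only for a density-zero set of primes. The analytic step is implemented differently. You expand $\lambda_F(p^r)$ and $\lambda_G(p^r)$ by partial fractions into bounded exponential sums (after discarding the density-zero set where the parameters of a single form degenerate -- Weiss's result, which the paper also quotes) and conclude via Ces\`aro means of the product sequence and of its square; this is correct, and the two points one must check -- that $w_0=0$ when no $\beta_{i,p}\delta_{j,p}$ equals $1$, and that not all $w_\nu$ vanish because the value at $r=0$ is $1$ -- are both in your write-up. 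The paper instead writes $\sum_r\lambda_F(p^r)\lambda_G(p^r)p^{-rs}=g_p(p^{-s})\prod_{i,j}(1-\beta_{i,p}\delta_{j,p}p^{-s})^{-1}$ with $\deg g_p\le 14$ (\lemref{14}), so that the $16$ potential poles on ${\rm Re}(s)=0$ cannot all be cancelled, and then applies Landau's theorem to force a pole at $s=0$. The two mechanisms are essentially equivalent (Landau versus Pringsheim), and yours is arguably more transparent.

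The genuine gap is exactly where you flag it: the density-one disjointness of the Satake parameters of $F$ and $G$, which is the paper's \thmref{satake} and the technical heart of the argument. Your sketch -- coincidence locus as a proper conjugation-stable closed subscheme, then an algebraic Chebotarev theorem -- is the right mechanism, but it needs two concrete inputs that the paper supplies: (i) the determination of the image of $\rho_{F,\lambda}\times\rho_{G,\lambda}$ as the full fibre product over similitudes (\thmref{product_image}, from Kumar--Kumari--Weiss), which yields that the Zariski closure of the image is connected and isomorphic to ${\rm GSp}_4\times{\rm Sp}_4$ (\lemref{dense}), hence not contained in the coincidence locus; and (ii) Rajan's algebraic Chebotarev density theorem (\thmref{rajan_result}). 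Saying the representations have ``large image and no exceptional self-twists'' gestures at (i) but does not prove it. There is also a technical wrinkle in your twist by $\chi_{\rm cyc}^{k_1-k_2}$: to apply the algebraic Chebotarev theorem the coincidence condition must be cut out by polynomials in the matrix entries of $(\gamma_1,\gamma_2)$, whereas recovering $\chi_{\rm cyc}^{k_1-k_2}$ from the similitudes would require extracting a $(2k_1-3)$-th root. This is precisely why the paper compares the eigenvalues of ${\rm sim}(\gamma_2)\gamma_1^2$ and ${\rm sim}(\gamma_1)\gamma_2^2$, which are integral powers of $p$ times $\beta_{i,p}^2$ and $\delta_{j,p}^2$; distinctness of the squares then gives distinctness of the parameters. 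So: correct strategy, correct and somewhat different analytic step, but an honest hole at the hardest point, which the paper fills with an external image theorem plus Rajan's theorem.
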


	To prove the above result, \thmref{satake} plays a vital role which is the most technical part of this paper. It states that the Satake $p$-parameters of any two Siegel eigenforms  $F$ and $G$, as in \thmref{prime_power}, are distinct for a set of primes $p$ of density 1. To establish this result, we make use of certain tools from the theory of Galois representations attached to Siegel eigenforms.
		\begin{rmk}
		{\rm 
			We want to emphasize that the ideas used in the proof of \thmref{prime_power} can be adopted to prove a similar result for some other automorphic forms. For example, one can easily prove such simultaneous sign change result 
			for eigenvalues of  two non-CM elliptic newforms such that neither of them is a Galois-conjugate to a twist of the other, which generalizes \cite[Theorem 3]{gkp}. In the elliptic case, we need to  use \cite[Theorem 3.2.2]{loe} to prove an analogous result to \thmref{satake}.}
	\end{rmk}
	
	
 Now we  state our next result which answers question (\ref{question}) for Siegel modular forms. 
	\begin{thm}\label{thm3}
		Let $F \in S_{k_1}^\perp(\Gamma_2)$ and $G \in S_{k_2}^\perp(\Gamma_2)$ be Siegel eigenforms  with normalized eigenvalues $\lambda_F(n)$ and $\lambda_G(n)$, respectively.
		If
		$$
		\liminf_{x\rightarrow \infty }\frac{\#\{n\le x: {\rm sign} \left( \lambda_F(n) \right)\neq {\rm sign}\left(  \lambda_G(n)\right) \}}{x} =0,
		$$
		then $k_1=k_2$ and $F$ is a scalar multiple of $G$.
	\end{thm}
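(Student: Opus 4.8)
I would argue by contraposition. Negating the conclusion amounts to assuming that $F$ is not a scalar multiple of $G$ (if $k_1\neq k_2$ this holds automatically, since a nonzero holomorphic function on the Siegel upper half space is modular of at most one weight for $\Gamma_2$). Under this assumption I aim to show that
$$B:=\{\,n\geq 1:\ {\rm sign}(\lambda_F(n))\neq{\rm sign}(\lambda_G(n))\,\}$$
has positive lower density, contradicting the hypothesis. By \thmref{prime_power}, since $F$ is not a constant multiple of $G$, there is a prime $p_0$ such that $\{\lambda_F(p_0^r)\lambda_G(p_0^r)\}_{r\geq 0}$ changes sign infinitely often; its $r=0$ term is $\lambda_F(1)\lambda_G(1)=1>0$, so some exponent $r_0\geq 1$ satisfies $\lambda_F(p_0^{r_0})\lambda_G(p_0^{r_0})<0$, and in particular $\lambda_F(p_0^{r_0}),\lambda_G(p_0^{r_0})$ are both nonzero.

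The main step is a sign flip based on multiplicativity of $\lambda_F$ and $\lambda_G$, hence of $h:=\lambda_F\lambda_G$. For $\gcd(m,p_0)=1$ one has $h(p_0^{r_0}m)=h(p_0^{r_0})h(m)$, which is negative whenever $h(m)>0$, and $h(n)<0$ means exactly that $\lambda_F(n),\lambda_G(n)$ are nonzero of opposite sign, that is, $n\in B$. So $m\mapsto p_0^{r_0}m$ injects $\{m\leq y:\gcd(m,p_0)=1,\ h(m)>0\}$ into $B\cap[1,p_0^{r_0}y]$. To bound the source from below, note that $\{n:h(n)\leq 0\}=B\sqcup Z$ with $Z:=\{n:\lambda_F(n)=\lambda_G(n)=0\}$: if $h(n)\leq 0$ then either both values are nonzero of opposite sign, or exactly one of them is zero — in both cases $n\in B$ — or both vanish, in which case $n\in Z\subseteq B^c$. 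Hence among the $\sim(1-1/p_0)y$ integers $m\leq y$ prime to $p_0$, all but at most $\#(B\cap[1,y])+\#(Z\cap[1,y])$ of them satisfy $h(m)>0$.

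To conclude, fix a sequence $x_j\to\infty$ realizing the $\liminf$, so $\#(B\cap[1,x_j])=o(x_j)$, hence also $\#(B\cap[1,x_j/p_0^{r_0}])=o(x_j)$ (being $\leq\#(B\cap[1,x_j])$). Taking $y=x_j/p_0^{r_0}$ and combining with the injection above gives
$$o(x_j)=\#(B\cap[1,x_j])\geq\Big(1-\tfrac1{p_0}\Big)\frac{x_j}{p_0^{r_0}}-o(x_j)-\#\big(Z\cap[1,x_j/p_0^{r_0}]\big)+O(1),$$
which is absurd once one knows $\#(Z\cap[1,y])=o(y)$. This yields the desired contradiction, so $F$ is a scalar multiple of $G$, and then $k_1=k_2$.

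The step I expect to be the main obstacle is exactly establishing $\#(Z\cap[1,y])=o(y)$. By multiplicativity, a single vanishing $\lambda_F(p^e)=0$ would force $\lambda_F$ to vanish on all $n$ with $v_p(n)=e$, a set of positive density; thus $\{n:\lambda_F(n)=0\}$ is either empty or of positive lower density, and one genuinely needs to exclude the latter — i.e.\ to know that $\lambda_F(p^r)\neq 0$ for all $p$ and $r$ (and it suffices to have this for just one of $F$ and $G$, since $Z\subseteq\{\lambda_F=0\}\cap\{\lambda_G=0\}$). This non-vanishing is the Siegel analogue of Serre's theorem on vanishing Fourier coefficients, and should be extractable from the analytic properties of the spin (or standard) $L$-function attached to $F$; I would record it as a preliminary lemma, or cite it, before running the argument above.
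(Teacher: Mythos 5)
Your overall mechanism coincides with the paper's: invoke \thmref{prime_power} to get a prime power $p_0^{r_0}$ with $\lambda_F(p_0^{r_0})\lambda_G(p_0^{r_0})<0$, then use multiplicativity of $h:=\lambda_F\lambda_G$ to push integers $m$ with $h(m)>0$ into the exceptional set via $m\mapsto p_0^{r_0}m$. The gap is exactly where you flagged it, and it is not a removable technicality in the form you set the argument up: you need $Z=\{n:\lambda_F(n)=\lambda_G(n)=0\}$ to have density zero (your counting in fact only needs its upper density below $1-1/p_0$, but that is no easier). As you yourself observe, a single vanishing value $\lambda_F(p^e)=0$ forces $\lambda_F$ to vanish on the positive-density set $\{n:v_p(n)=e\}$, and likewise for $G$; so already one pair $\lambda_F(p^e)=0=\lambda_G(q^f)$ (with $p,q$ not necessarily equal) gives $Z$ positive density, indeed density at least $(1-1/2)/2\cdot(1-1/3)/3>1-1/p_0$ is possible in principle. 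Ruling this out is a Lehmer-type non-vanishing statement at prime powers, open even for elliptic newforms, and nothing of the sort is known for Siegel eigenforms. The only unconditional input available is \cite[Theorem 4]{rsw}, which controls vanishing at \emph{primes} only, and only up to a set of primes of relative density zero with a $(\log x)^{-\delta}$ saving --- enough to make $\sum_{\lambda_F(p)=0}1/p$ converge, but nowhere near enough to make $\{n:\lambda_F(n)=0\}$ sparse.

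The paper closes this hole by not running $m$ over all integers coprime to $p_0$: it restricts the map $m\mapsto p_0^{t}m$ to the set $\mathcal A$ of $\mathcal B$-free numbers for
$\mathcal{B}=\{p_0\}\cup\{p\neq p_0:\lambda_F(p)\lambda_G(p)=0\}\cup\{p^2:p\neq p_0,\ \lambda_F(p)\lambda_G(p)\neq 0\}$.
Every $n\in\mathcal A$ is squarefree, coprime to $p_0$, and satisfies $h(n)\neq 0$ by multiplicativity, so the set $Z$ never enters the count; Erd\H{o}s's theorem \eqref{bfree} gives $\mathcal A$ positive density precisely because $\sum_{b\in\mathcal B}1/b<\infty$, and verifying that convergence is the one place where \cite[Theorem 4]{rsw} is needed. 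If you replace ``all $m\le y$ coprime to $p_0$'' by ``all $m\le y$ in $\mathcal A$,'' your argument goes through and recovers the paper's proof, with the stronger conclusion that $\{n:\lambda_F(n)\lambda_G(n)<0\}$ has positive lower density.
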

	
	The main ingredient in the proof of \thmref{thm3} is the existence of a prime power $p_0^t$ for which $\lambda_F(p_0^t)\lambda_G(p_0^t)<0$, governed by \thmref{prime_power}. We want to emphasize that if one could manage to get a prime $p_0$ such that $\lambda_F(p_0) \lambda_G(p_0) < 0$ then \thmref{thm3} can be refined to distinguish $F$ and $G$ by the signs of their eigenvalues indexed by squarefree positive integers. Moreover, it would be of interest to study a similar problem for prime indices.


	\subsection*{Applications}
	We now use the above results  to obtain similar results for the Fourier coefficients of a Siegel eigenform. In particular, under certain assumptions, it is possible to characterise Siegel eigenforms in terms of signs  of its Fourier coefficients supported on matrices of the form $n T_0$, where $T_0$ is a certain fixed symmetric, half-integral, positive definite $2 \times 2$ matrix and $n$ varies over a set of integers of positive lower density.
	
	Let $F \in S_k(\Gamma_2)$ be a Siegel eigenform with eigenvalues $\mu_F(n)$ and Fourier coefficients $A_F(T)$. Suppose $T_0$ is a symmetric, half-integral, 
	positive-definite matrix such that 
	$-\det(2T_0) = -D_0$ is a fundamental discriminant and $\mathbb{Q}(\sqrt{-D_0})$ 
	has class number $1$. Then \cite[Theorem 2.4.1]{and} simplifies to give the following relation:
	\begin{equation*}
		\sum_{n \ge 1} \frac{A_F(nT_0)}{n^s} = A_F(T_0) \sum_{n \ge 1} \frac{\mu_F(n)}{n^s},
	\end{equation*}
	{in some right half plane Re$(s) \gg 1$}. In particular, if $A_F(T_0) \neq 0$ then 
	
	\begin{equation}\label{eq:relation}
		\frac{A_F(nT_0)}{A_F(T_0)} =\mu_F(n),\quad  \text{for~all }\quad n\ge 1.
	\end{equation} 
	Since the sign of $\mu_F(n)$ and $\lambda_F(n)$ are the same, by using the above relation together with Theorems \ref{prime_power} and  \ref{thm3}, we have the following result.
	\begin{cor}\label{cor:2}
		Let $F\in S_{k_1}^\perp(\Gamma_2)$ and $G\in S_{k_2}^\perp(\Gamma_2)$ 
		be Siegel eigenforms with Fourier coefficients 
		$A_{F}(T)$ and $A_{G}(T)$, respectively. 
		Suppose there exists a $T_0$ { such that 
		$-\det (2T_0)=-D_0$ is fundamental, 
		$\mathbb{Q}(\sqrt{-D_0})$ has class number $1$ and} $A_{F}(T_0) = A_{G}(T_0) =1$. Then we have the following.
		\begin{enumerate}
			\item 
			If $F$ and $G$  are not { scalar multiples} of each other, then there exists a set of primes of density $1$ such that for each prime $p$ in this set, the sequence $\{A_F(p^r T_0)A_G(p^rT_0)\}_{r\ge 0}$ has infinitely many sign changes.
			\item
			If
			$$
			\liminf_{x\rightarrow \infty }\frac{\#\{n\le x: {\rm sign}\left( A_F(n T_0)\right)\neq {\rm sign}\left( A_G(n T_0)\right) \}}{x} =0,
			$$
			then $k_1=k_2$ and $F$ is a scalar multiple of $G$.
		\end{enumerate}
	\end{cor}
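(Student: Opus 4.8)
The plan is to deduce both assertions from Theorems \ref{prime_power} and \ref{thm3} by transporting everything through the identity \eqref{eq:relation}. Since $A_F(T_0)=A_G(T_0)=1$ and $-\det(2T_0)=-D_0$ is a fundamental discriminant with $\mathbb{Q}(\sqrt{-D_0})$ of class number $1$, relation \eqref{eq:relation} gives $A_F(nT_0)=\mu_F(n)$ and $A_G(nT_0)=\mu_G(n)$ for all $n\ge 1$. Because $\lambda_F(n)=n^{3/2-k_1}\mu_F(n)$ and $\lambda_G(n)=n^{3/2-k_2}\mu_G(n)$ with strictly positive scaling factors, we obtain, for every $n\ge 1$,
\[
{\rm sign}\big(A_F(nT_0)\big)={\rm sign}\big(\lambda_F(n)\big),\qquad {\rm sign}\big(A_G(nT_0)\big)={\rm sign}\big(\lambda_G(n)\big);
\]
in particular the zero loci coincide, so every sign comparison between the Fourier coefficients along $nT_0$ is identical to the corresponding comparison between the normalized eigenvalues.

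For part (1), it follows that ${\rm sign}\big(A_F(p^rT_0)A_G(p^rT_0)\big)={\rm sign}\big(\lambda_F(p^r)\lambda_G(p^r)\big)$ for every prime $p$ and every $r\ge 0$. Since $F$ and $G$ lie in $S_{k_1}^\perp(\Gamma_2)$ and $S_{k_2}^\perp(\Gamma_2)$ and are not scalar multiples of each other, \thmref{prime_power} produces a set of primes $p$ of density $1$ for which $\{\lambda_F(p^r)\lambda_G(p^r)\}_{r\ge 0}$ changes sign infinitely often; by the displayed identity the same primes serve for $\{A_F(p^rT_0)A_G(p^rT_0)\}_{r\ge 0}$. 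For part (2), the equality of signs above makes the counting function $\#\{n\le x:{\rm sign}(A_F(nT_0))\neq{\rm sign}(A_G(nT_0))\}$ literally equal to $\#\{n\le x:{\rm sign}(\lambda_F(n))\neq{\rm sign}(\lambda_G(n))\}$ for every $x$, so the hypothesis is verbatim that of \thmref{thm3}, whose conclusion $k_1=k_2$ and $F$ a scalar multiple of $G$ then applies directly.

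There is essentially no obstacle here: all the work is in Theorems \ref{prime_power} and \ref{thm3}, and the corollary is a translation via Andrianov's identity. The only points needing care are that the hypotheses imposed on $T_0$ are precisely those making \cite[Theorem 2.4.1]{and} collapse to \eqref{eq:relation}, and that the normalization $A_F(T_0)=A_G(T_0)=1$ is used only to identify $A_F(nT_0)$ and $A_G(nT_0)$ with $\mu_F(n)$ and $\mu_G(n)$ on the nose; any common nonzero value suffices for part (1), since an infinitude of sign changes is unaffected by a global sign, whereas part (2) additionally requires $A_F(T_0)$ and $A_G(T_0)$ to have the same sign.
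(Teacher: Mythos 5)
Your proof is correct and follows exactly the paper's route: the corollary is deduced by specializing Andrianov's relation \eqref{eq:relation} under the stated hypotheses on $T_0$, observing that $A_F(nT_0)=\mu_F(n)$ and $A_G(nT_0)=\mu_G(n)$ have the same signs as $\lambda_F(n)$ and $\lambda_G(n)$, and then invoking Theorems \ref{prime_power} and \ref{thm3} verbatim. Your closing remarks on which normalization hypotheses are actually needed for each part are accurate but not required.
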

	
	\noindent {\bf{Structure of the paper.}}
	In \S \ref{smf}, we recall Siegel modular forms 
	and some properties of the 	spinor $L$-functions.  
	We also review some standard results about the Galois representations attached to Siegel eigenforms. These representations play a vital role in proving the distinctness of Satake parameters of two Siegel eigenforms in \S \ref{intermediate}. Also in the same section, 
	we show that the degree of a certain polynomial, appearing in the Euler product of a certain Dirichlet series, is at most $14$.    
	Finally, by using the properties of $\mathfrak{B}$-free numbers and the results obtained in \S \ref{intermediate}, we prove Theorems \ref{prime_power} and \ref{thm3} in \S \ref{p_prime_power} and \S \ref{p_thm3}, respectively.

	\section{Prerequisite{s}}\label{smf}
	In this section, we recall some basic properties of Siegel modular forms, associated spinor $L$-functions and Galois representations  which are based primarily on \cite{and} and \cite{pit}.
	\subsection{Siegel modular forms}
		The real symplectic unimodular group of degree $2$ is defined by 
	$$
	{\rm Sp}_4(\R)=\{M\in {\rm GL}_4(\mathbb{R}):MJM^t=J \},
	$$
	where $J=\begin{pmatrix} {0_2} & {I_2}\\{-I_2} & {0_2}\end{pmatrix}$,
	$M^t$ denotes the transpose matrix of the matrix $M$, $0_2$ is the $2\times 2$ zero matrix and
	$I_2$ is the $2\times 2$ identity matrix. 
	Let $\Gamma_2:={\rm Sp}_4(\Z)$ be the subgroup of ${\rm Sp}_4(\R)$ { consisting of matrices} 
	with integer entries.
	For a positive integer $k$, we denote the space of 
	Siegel modular forms (resp. cusp forms) of weight $k$ on the group $\Gamma_2$ by $M_k(\Gamma_2)$ 
	(resp. $S_k(\Gamma_2)$). 
	There is an algebra of Hecke operators acting on the space $M_k(\Gamma_2)$ 
	which preserves $S_k(\Gamma_2)$. A Siegel modular form in $S_k(\Gamma_2)$ is called a 
	{\it Siegel eigenform} if it is a common eigenvector of all the Hecke operators.
	
	If $k$ is an even integer, the {\it Saito-Kurokawa} conjecture asserts the existence of a lifting of any modular form of weight $2k-2$, level $1$ to a Siegel modular form of degree $2$ of weight $k$ on $\Gamma_2$. 
	This conjecture has been proved now due to the work of many mathematicians.
	The image of this lifting is a special subspace of $M_k(\Gamma_2)$ {named Maass space}. Moreover, this Saito-Kurokawa lifting is an isomorphism { from the modular forms space} onto the Maass space mapping Eisenstein series to Eisenstein series, cusp forms to cusp forms, and eigenforms to  eigenforms. 
	
	Note that the space $S_k(\Gamma_2)$ is a Hilbert space {under the Petersson inner product given by 
	\cite[Eq. (1.1.16)]{and}}. We denote by {$S_k^\perp(\Gamma_2)$ the} subspace  which is the orthogonal complement to the Maass space in $S_k(\Gamma_2)$.
	
	\subsection{Satake $p$-parameters and Spinor $L$-functions}
	Fix a Siegel eigenform $F \in S_k(\Gamma_2)$  with $n$-th Hecke eigenvalue $\mu_F(n)$ and $n$-th normalized Hecke eigenvalue $\lambda_F(n):=\frac{\mu_F(n)}{n^{k-3/2}}$.
	For any prime $p$, let $\alpha_{0,p}, \alpha_{1,p}, \alpha_{2,p}$ be the Satake $p$-parameters of $F$. Then we know that $\alpha_{0,p}^2 \alpha_{1,p} \alpha_{2,p}=p^{2k-3}$. We now define the complex numbers
	\begin{equation}\label{actual_satake}
		\beta_{1,p}:=\frac{\alpha_{0,p}}{p^{k-3/2}},\quad \beta_{2,p}:=\frac{\alpha_{0,p}\alpha_{1,p}}{p^{k-3/2}}, \quad \beta_{3, p}:=\frac{\alpha_{0,p}\alpha_{2,p}}{p^{k-3/2}}, \quad \beta_{4, p}:=\frac{\alpha_{0,p}\alpha_{1,p}\alpha_{2,p}}{p^{k-3/2}}
	\end{equation}
	and by abuse of notation, we say that  $\beta_{i,p}$'s are the (normalized) Satake $p$-parameters of $F$. It is clear that
	$\beta_{1,p}=\beta_{4,p}^{-1}$ and $\beta_{2,p}=\beta_{3,p}^{-1}$.
	
	The  spinor $L$-function attached to $F$ is defined by
	\begin{equation*}\label{euler}
		L(s, F, spin) = \prod_{p\ {\rm prime}} L_p(s,F, spin),
	\end{equation*}
	where
	\begin{equation}\label{beta}
		L_p(s,F, spin)=\prod_{1\le i \le 4}(1-\beta_{i,p}p^{-s})^{-1}.
	\end{equation} 
	Indeed, one can obtain that (see, for example \cite[Theorem 3.11]{pit}) 
	\begin{align*}
		L_p(s,F, spin)^{-1}	 =p^{-4s} - \lambda_F(p)p^{-3s} + \left(\lambda_F(p)^2-\lambda_F(p^2)-p^{-1}\right)  p^{-2s} - \lambda_F(p) p^{-s} 	+1 .
	\end{align*}
	
	\begin{rmk}\label{satake_roots}
		{\rm 	The above equation shows that  $p^{k-3/2}\beta_{i,p}$'s are the roots of the   $p$-th Hecke polynomial (cf. \eqref{hecke_polynomial}). In other words, 
			$$
		\prod_{1\le i \le 4}(X-p^{k-3/2}\beta_{i,p})=	X^4 - \mu_F(p) X^3+ \left(\mu_F(p)^2-\mu_F(p^2)-p^{2k-4}\right) X^2 - \mu_F(p)p^{2k-3} X 	+ p^{4k-6}.
			$$
			This fact will play a crucial role in the proof of \thmref{satake}.}
	\end{rmk}
	
	
	\subsection{Bounds of eigenvalues}
	We now assume that the eigenform $F\in S_k^\perp(\Gamma_2)$. Then the generalized Ramanujan conjecture proved by Weissauer \cite{wei09}  asserts that for any prime $p$, we have
	\begin{equation}\label{rama}
		|\beta_{i,p}|=1 \quad {\rm for~all ~} 1\le i\le 4.
	\end{equation}
	However,  the above assertion is not true if $F$ is a Saito-Kurokawa lift. Writing 
	$\displaystyle{L(s, F, spin) = \sum_{n=1}^\infty\frac{a_F(n)}{n^s}}$ as a Dirichlet series, one deduces from \eqref{rama} that 
	\begin{equation}\label{bound1}
		|a_F(n)|\le d_4(n) \quad {\rm for~all}\quad n\ge 1,
	\end{equation}
	where $d_4(n)$  is the number of ways of writing $n$ as a product of $4$ positive integers.
	Furthermore,  it easily follows from \cite[Theorem 1.3.2]{and} that
	\begin{equation}\label{mobius1}
		\sum_{n=1}^\infty\frac{\lambda_F(n)}{n^{s}}=\zeta(2s+1)^{-1}L(s, F, spin) = \zeta(2s+1)^{-1}\sum_{n=1}^\infty\frac{a_F(n)}{n^s}
	\end{equation}
	from which we conclude that for any integer $n\ge 1$, we have
	\begin{equation*}\label{mobius2}
		{\lambda_F(n)}=\sum_{d^2|n}\frac{\mu(d)}{d}a_F\left({n}/{d^2}\right),
	\end{equation*}
	where $\mu$ is the M\"{o}bius function.
	Using the bound \eqref{bound1}, we see that $|\lambda_F(n)|\le \sum_{d^2|n} \frac{d_4(n/d^2)}{d}$ and therefore for any $\epsilon>0$,  we have
	\begin{equation}\label{raman}
		|\lambda_F(n)| \ll_\epsilon n^{\epsilon}.
	\end{equation}

	\subsection{Galois representations attached to Siegel modular forms}
	Let $F\in S_{k}(\Gamma_2)$ be a Siegel eigenform with $n$-th
	eigenvalue $\mu_F(n)$. Suppose $E$ denotes 
	the number field generated by $\mu_F(n)$, $ n\ge 1$. 
	It follows from the works of Taylor, 
	Laumon 
	 and Weissauer 
	 that one can attach a family of  Galois representations  to
	$F$. More precisely, if $\lambda$ is a prime in $E$ above a rational prime $\ell$ and  $E_\lambda$ denotes the completion of $E$ at $\lambda$, then there
	exists a continuous semisimple Galois representation  
	$$\rho_{F,\lambda}: {\rm Gal}(\overline{\Q}/\Q) \rightarrow {\rm GSp}_4(\overline{E}_\lambda)$$
	such
	that $\rho_{F,\lambda}$ is unramified outside $\ell$.  Moreover, the characteristic polynomial of 
	the Frobenius ${\rm Frob}_p$ at $p \neq \ell$ is  
	\begin{equation}\label{hecke_polynomial}
		X^4 - \mu_F(p) X^3 + \left(\mu_F(p)^2- \mu_F(p^2) - p^{2k-4} \right) X^2 - p^{2k-3}\mu_F(p) X
		+p^{4k-6}.
	\end{equation}
Furthermore, for all but finitely many primes $\ell$, the representation $\rho_{F,\lambda}$ is valued in  ${\rm GSp}_4(E_\lambda)$ (see, for example \cite[Corollary 2.2]{kkw-lt}) and hence we can view $\rho_{F,\lambda}$ (after conjugating this representation) as a representation valued in ${\rm GSp}_4(\mathcal{O}_{E_\lambda})$, 	where $\mathcal{O}_{E_\lambda}$ is the ring of integers of $E_\lambda$.
Therefore, the maximal
	possible image for $\rho_{F,\lambda}$ is the group
	$$
	A_\lambda= \{\gamma \in  {\rm GSp}_4(\mathcal{O}_{E_\lambda}) : {\rm sim}(\gamma) \in {(\Z_\ell^\times)}^{2k-3}\},
	$$
	where ${\rm sim}(\gamma)$ denotes the similitude of $\gamma$.
Let $F$ be a non-Saito-Kurokawa lift.
	 Dieulefait \cite[Theorem 4.2]{die02} has proved that if $\ell$ is large enough and splits completely in $E$, then the image of  $\rho_{F,\lambda}$ is $A_\lambda$. In a recent paper \cite{kkw-lt}, the precise image of the Galois representations attached to  Siegel eigenforms of higher level has been studied. Since $F$ is of level $1$, it cannot have any (non-trivial) inner twists. { This can be easily deduced from the fact that if $(\sigma,\chi)$ is an inner twist of a Siegel eigenform of level $N$ and nebentype $\varepsilon$ then we have $ \chi^2=\sigma(\varepsilon)\cdot \varepsilon^{-1}$ and hence the
	 	only prime divisors of the conductor of $\chi$ are the primes dividing $N$ (see \cite[Section 2.1]{kkw-lt})}. 
		So, it follows from \cite[Theorem 1.4 ($ii$)]{kkw-lt}) that for any sufficiently large prime $\ell$, 
the image of  $\rho_{F,\lambda}$ is $A_\lambda$.

	For our purpose, we need the  information about the images of the product Galois representations which we recall from \cite{kkw}. Let $F$ and $G$ be  Siegel eigenforms  and 
	let $E$ be the number field generated by  all the Hecke eigenvalues of $F$ and $G$. Let $\lambda$ be a prime in $E$ above a rational prime $\ell$.  Let $\rho_{F, \lambda}$ and $\rho_{G, \lambda}$ be the $\lambda$-adic Galois representations attached to $F$ and $G$. From the above discussion, we may assume that, for a large prime $\ell$, both of these  representations are valued in ${\rm GSp}_4(\mathcal{O}_{E_\lambda})$. 	 Consider the product  Galois representation 
	$$ \rho_{F, \lambda}\times \rho_{G,\lambda}: {\rm Gal}(\overline{\Q}/\Q) \longrightarrow {\rm GSp}_4(\mathcal{O}_{E_\lambda}) \times {\rm GSp}_4(\mathcal{O}_{E_\lambda}) \quad {\rm defined ~by}	\quad  
	\sigma\mapsto (\rho_{F, \lambda}(\sigma), \rho_{G, \lambda}(\sigma)).
	$$
   Then we have the following result.
	\begin{thm}\cite{kkw}\label{product_image}
	Let $F \in S_{k_1}^\perp(\Gamma_2)$ and $G \in S_{k_2}^\perp(\Gamma_2)$ be Siegel eigenforms such that $F$ is not a constant multiple of $G$. Then for all but finitely many primes  $\ell$, the image of 	$ \rho_{F, \lambda}\times \rho_{G,\lambda}$ is 
		$$
		\{(\gamma_1,\gamma_2) \in  {\rm GSp}_4(\mathcal{O}_{E_\lambda}) \times {\rm GSp}_4(\mathcal{O}_{E_\lambda}) : {\exists ~v \in \Z_\ell^\times}, {\rm sim}(\gamma_i)=v^{2k_i-3}, 1\le i\le 2\}.
		$$
	\end{thm}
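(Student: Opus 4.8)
The plan is to combine the single-form image results recalled above with Goursat's lemma and the (quasi-)simplicity of the symplectic group over finite fields, isolating the use of the hypotheses on $F$ and $G$ to the one place where the two images could be ``glued'' non-trivially. Write $H_\lambda$ for the image of $\rho_{F,\lambda}\times\rho_{G,\lambda}$, set $A_\lambda^{(i)}:=\{\gamma\in\mathrm{GSp}_4(\mathcal{O}_{E_\lambda}):\mathrm{sim}(\gamma)\in(\Z_\ell^\times)^{2k_i-3}\}$, and let $P_\lambda$ be the target group in the statement. Because $\mathrm{sim}\circ\rho_{F,\lambda}$ and $\mathrm{sim}\circ\rho_{G,\lambda}$ are the $(2k_1-3)$-rd and $(2k_2-3)$-rd powers of one and the same ($\ell$-adic cyclotomic) character, one has $H_\lambda\subseteq P_\lambda$; and by the results of Dieulefait \cite{die02} and \cite{kkw-lt}, for all but finitely many $\ell$ each projection $\mathrm{pr}_i(H_\lambda)$ is the full group $A_\lambda^{(i)}$. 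Since $\mathrm{pr}_i\colon P_\lambda\to A_\lambda^{(i)}$ is also surjective, the whole content of the theorem is the reverse inclusion $P_\lambda\subseteq H_\lambda$ for $\ell\gg 0$.

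First I would apply Goursat's lemma: being a closed subgroup of $A_\lambda^{(1)}\times A_\lambda^{(2)}$ that surjects onto each factor, $H_\lambda$ is the fibre product $A_\lambda^{(1)}\times_Q A_\lambda^{(2)}$ for some common profinite quotient $Q$ of $A_\lambda^{(1)}$ and $A_\lambda^{(2)}$. I claim it suffices to prove that $Q$ is abelian. Indeed, $\mathrm{Sp}_4(\mathcal{O}_{E_\lambda})$ is perfect for $\ell\ge 5$, so if $Q$ is abelian then $\ker(A_\lambda^{(i)}\to Q)\supseteq\mathrm{Sp}_4(\mathcal{O}_{E_\lambda})$, whence $H_\lambda\supseteq\mathrm{Sp}_4(\mathcal{O}_{E_\lambda})\times\mathrm{Sp}_4(\mathcal{O}_{E_\lambda})$, which is exactly the kernel of the similitude map on $P_\lambda$; since the common similitude parameter $v$ runs over all of $\Z_\ell^\times$ (the cyclotomic character being surjective for $\ell$ unramified in $E$), $H_\lambda$ also surjects onto the image of that map, and therefore $H_\lambda=P_\lambda$.

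So suppose $Q$ is non-abelian, for $\lambda$ above infinitely many primes $\ell$. Using the description of the closed normal subgroups of $\mathrm{Sp}_4(\mathcal{O}_{E_\lambda})$ together with the quasi-simplicity of $\mathrm{Sp}_4(\mathbb{F}_\lambda)$ for $\ell\ge 5$, the image of $\mathrm{Sp}_4(\mathcal{O}_{E_\lambda})$ in $Q$ under either projection must be all of the simple group $\mathrm{PSp}_4(\mathbb{F}_\lambda)$; in particular $\mathrm{PSp}_4(\mathbb{F}_\lambda)$ occurs simultaneously as a quotient of the residual images $\overline\rho_{F,\lambda}$ and $\overline\rho_{G,\lambda}$. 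Since $\mathrm{PSp}_4$ is of Dynkin type $C_2$, which has no graph automorphism in odd characteristic, every automorphism of $\mathrm{PSp}_4(\mathbb{F}_\lambda)$ is a product of an inner one, a diagonal one (realised by conjugation inside $\mathrm{GSp}_4(\mathbb{F}_\lambda)$), and a $\mathrm{Gal}(\mathbb{F}_\lambda/\mathbb{F}_\ell)$-Frobenius twist. Transporting the Goursat isomorphism through the two projections then shows that $\overline\rho_{G,\lambda}$ is, projectively, isomorphic to a Frobenius conjugate of a character twist of $\overline\rho_{F,\lambda}$; equivalently, the reductions mod $\lambda$ of the Hecke polynomials \eqref{hecke_polynomial} of $F$ and $G$ at every prime $p$ are proportional after one fixed $\mathrm{Gal}(\mathbb{F}_\lambda/\mathbb{F}_\ell)$-twist.

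I expect the final step — promoting this residual coincidence, valid for infinitely many $\lambda$, to a genuine relation and extracting a contradiction — to be the main obstacle. Here is how I would argue. The twisting character is unramified outside $\ell$ because $F$ and $G$ have level $1$, hence it is a power of the mod-$\ell$ cyclotomic character whose exponent, along with the Frobenius twist, is determined by matching similitude characters and weights; a compactness argument over the resulting finite set of possibilities produces a fixed embedding $\iota$ of the Hecke field and a fixed finite-order character $\chi$ with $\rho_{G,\lambda}\cong\rho_{F,\lambda}^{\iota}\otimes\chi$. Because $F$ is of level $1$ it has no non-trivial inner twist (as recalled before \thmref{product_image}), which forces $\chi$ to be trivial and $k_1=k_2$, and comparison of Hecke eigenvalues then shows that $F$ and $G$ have the same system of eigenvalues up to a scalar — contradicting the hypothesis that $F$ is not a constant multiple of $G$. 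Hence $Q$ is abelian for all but finitely many $\ell$, and the theorem follows from the reduction in the second paragraph.
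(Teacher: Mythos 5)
First, a point of comparison: the paper itself contains no proof of \thmref{product_image} --- it is quoted verbatim from \cite{kkw}, a paper of Kumar, Kumari and Weiss listed as ``in preparation'' --- so there is no internal argument to measure yours against. That said, your overall architecture is the standard Ribet--Loeffler strategy for fibre-product image theorems, and the first two paragraphs are essentially sound: the containment $H_\lambda\subseteq P_\lambda$ via the common cyclotomic similitude character, the surjectivity of both projections from \cite{die02} and \cite{kkw-lt}, Goursat's lemma for closed subgroups, and the reduction to ``$Q$ abelian'' using perfectness of ${\rm Sp}_4(\mathcal{O}_{E_\lambda})$ for $\ell\ge 5$ are all correct (modulo the minor point that the image of ${\rm Sp}_4(\mathcal{O}_{E_\lambda})$ in a non-abelian $Q$ need only \emph{surject onto} ${\rm PSp}_4(\mathbb{F}_\lambda)$, not equal it).

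The genuine gap is in the endgame, and it is not only the lifting step you flag. Granting the lift, your conclusion is $\rho_{G,\lambda}\cong\rho_{F,\lambda}^{\iota}\otimes\chi$ with $\chi$ trivial, i.e.\ $\mu_G(p)=\iota(\mu_F(p))$ for all $p$. If $\iota$ is a \emph{nontrivial} automorphism of the Hecke field this says only that $G$ is a Galois conjugate of $F$, which the hypothesis ``$F$ is not a constant multiple of $G$'' does not exclude; so no contradiction is reached. The appeal to ``$F$ has no inner twists'' does not help here: that fact controls self-relations $\sigma(\mu_F(p))=\chi(p)\mu_F(p)$, not relations between $F$ and $G$. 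Worse, the Galois-conjugate case is exactly where the statement is delicate: if $G=F^{\iota}$ and $\lambda$ is stabilised by $\iota$, then $\rho_{G,\lambda}\cong\iota_\lambda\circ\rho_{F,\lambda}$ and the product image really is a twisted diagonal, not the full fibre product, so your argument \emph{must} somewhere use more than the stated hypothesis or restrict the primes considered. The standard repair is to run the whole argument only for $\ell$ split completely in $E$ (still infinitely many primes, which is what the pigeonhole needs and what the application to \thmref{satake} requires): then $\mathbb{F}_\lambda=\mathbb{F}_\ell$, the Frobenius twist disappears, the twisting character is a power $\bar\chi_\ell^{m}$ with $m$ bounded independently of $\ell$ by Fontaine--Laffaille theory, and pigeonholing the congruences $\mu_G(p)\equiv p^{m}\mu_F(p)\pmod{\lambda}$ over infinitely many $\lambda$ yields the exact identity $\mu_G(p)=p^{k_2-k_1}\mu_F(p)$, which does contradict the hypothesis by multiplicity one (e.g.\ \cite{kms}). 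Relatedly, ``a compactness argument'' is not a substitute for this bounded-twist-plus-pigeonhole step, which is where all the content lies; and note that the split-prime version proves the conclusion only for infinitely many $\ell$, not the cofinite set asserted in the theorem, which is a further reason this result is left to \cite{kkw}.
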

	
	\subsection{Algebraic Chebotarev density theorem}
	We now recall the following result, a special case of a result of Rajan \cite{raj}, giving an algebraic formulation of the Chebotarev density	theorem  for the density of primes satisfying an algebraic conjugacy condition. This plays a crucial role in proving \thmref{satake}.
	\begin{thm}\cite[Theorem 3]{raj}\label{rajan_result}
		Let $K$ be a finite extension of $\Q_\ell$ and let $\mathcal G$ be
		an algebraic group over $K$. Let $\mathcal X$ be a subscheme of $\mathcal G$ defined over $K$ that is stable
		under the adjoint action of $\mathcal G$. Suppose that 
		$$ R : {\rm Gal}(\overline{\Q}/\Q) \rightarrow \mathcal G(K)$$
		is a Galois representation which is unramified outside a finite set of primes.
		Let $\mathcal H$ denote the Zariski closure of  ${\rm Im}(R)$ $($the image of $R)$ in $\mathcal G(K)$, with identity connected
		component $\mathcal H^\circ$ and component group $\Phi = \mathcal H/\mathcal H^\circ$. For each $\phi \in \Phi$, assume that $\mathcal H^\phi$ denotes its
		corresponding connected component. Let
		$$
		\Psi=\{\phi \in \Phi:\mathcal H^\phi \subset \mathcal X\}.
		$$
		Then the set of primes $p$  with $R({\rm Frob}_p) \in \mathcal X(K) \cap {\rm Im}(R)$ has density $\frac{|\Psi|}{|\Phi|}$.
	\end{thm}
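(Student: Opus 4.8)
The plan is to reduce everything to two inputs: a finite-group Chebotarev argument carried out in a profinite completion, and a measure-theoretic lemma saying that a proper subvariety meets a Zariski-dense compact group in a set of Haar measure zero. Write $\Gamma:={\rm Im}(R)$. Since $R$ is continuous and ${\rm Gal}(\overline{\Q}/\Q)$ is compact, $\Gamma$ is a compact subgroup of $\mathcal G(K)$; as $K$ is an $\ell$-adic local field, $\mathcal G(K)$ is an $\ell$-adic Lie group, and closed subgroups of such groups are again $\ell$-adic Lie groups, so $\Gamma$ is a compact $\ell$-adic Lie group with Zariski closure $\mathcal H$. Because $\Phi=\mathcal H/\mathcal H^\circ$ is finite, $\mathcal H^\circ(K)$ is open of index $|\Phi|$ in $\mathcal H(K)$, hence $\Gamma^\circ:=\Gamma\cap\mathcal H^\circ(K)$ is an open normal subgroup of $\Gamma$; moreover $\Gamma\to\Phi$ is surjective, because a Zariski-dense subgroup must meet every connected component. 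Thus $\Gamma/\Gamma^\circ\cong\Phi$, and this quotient of ${\rm Gal}(\overline{\Q}/\Q)$ corresponds to a finite Galois extension $L/\Q$ with ${\rm Gal}(L/\Q)\cong\Phi$, unramified outside the (finite) ramification locus of $R$. For any $p$ outside that locus, $R({\rm Frob}_p)$ is a well-defined conjugacy class in $\Gamma$ whose image in $\Phi$ is the Frobenius class of $p$ in ${\rm Gal}(L/\Q)$, and—since $\mathcal X$ is stable under the adjoint action—the condition $R({\rm Frob}_p)\in\mathcal X(K)$ depends only on this conjugacy class. Equip $\Gamma$ with its normalized Haar measure $\mu$. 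The goal is to prove (i) the density of $\{p:R({\rm Frob}_p)\in\mathcal X(K)\cap\Gamma\}$ equals $\mu(\mathcal X(K)\cap\Gamma)$, and (ii) $\mu(\mathcal X(K)\cap\Gamma)=|\Psi|/|\Phi|$.

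For (ii), decompose $\Gamma=\bigsqcup_{\phi\in\Phi}\Gamma^\phi$ with $\Gamma^\phi:=\Gamma\cap\mathcal H^\phi(K)$, each a coset of $\Gamma^\circ$, hence of $\mu$-measure $1/|\Phi|$. If $\phi\in\Psi$ then $\Gamma^\phi\subseteq\mathcal H^\phi(K)\subseteq\mathcal X(K)$, so this piece contributes $1/|\Phi|$. If $\phi\notin\Psi$, then $\mathcal X\cap\mathcal H^\phi$ is a proper Zariski-closed subset of the irreducible variety $\mathcal H^\phi$ (a translate of the connected group $\mathcal H^\circ$), and the key point is the lemma that $\mu(\mathcal X(K)\cap\Gamma^\phi)=0$. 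To prove it, translate by an element of $\Gamma^\phi$ to reduce to the assertion that a proper Zariski-closed subset $Y\subsetneq\mathcal H^\circ$ meets the Zariski-dense compact group $\Gamma^\circ$ in a $\mu$-null set. Choose a regular function $f$ on a Zariski-open subset of $\mathcal H^\circ$ that vanishes on $Y$ but not identically on $\mathcal H^\circ$. An open subgroup of $\Gamma^\circ$ is still Zariski-dense in $\mathcal H^\circ$ (since $\mathcal H^\circ$ is connected), so on a sufficiently small open subgroup, using the exponential chart of the Lie group $\Gamma^\circ$, $f$ pulls back to a nonzero $\Q_\ell$-analytic function on a polydisc in ${\rm Lie}(\Gamma^\circ)$, whose zero locus is Haar-null (Weierstrass preparation and induction on the dimension). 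This gives the lemma, and summing the contributions yields $\mu(\mathcal X(K)\cap\Gamma)=|\Psi|/|\Phi|$; note in particular that $\Psi$ is a union of conjugacy classes of $\Phi$.

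For (i), I would run a generalized Chebotarev argument by approximation inside the profinite group $\Gamma$. For every open normal subgroup $U$ of $\Gamma$, classical Chebotarev applied to the finite Galois extension attached to $\Gamma/U$ shows that for any conjugation-stable $\overline S\subseteq\Gamma/U$ the set of primes $p$ with $R({\rm Frob}_p)$ mapping into $\overline S$ has natural density equal to the normalized counting measure of $\overline S$. Now $\mathcal C:=\mathcal X(K)\cap\Gamma$ is closed and conjugation-stable, so it is the intersection of the clopen conjugation-stable sets $T_U$ (pull-backs of the image of $\mathcal C$ in $\Gamma/U$) containing it, whence $\mu(T_U)\downarrow\mu(\mathcal C)$ along a cofinal chain of $U$'s; dually the open conjugation-stable set ${\rm int}(\mathcal C)$ is an increasing union of clopen conjugation-stable pull-back sets $I_U$ with $\mu(I_U)\uparrow\mu({\rm int}(\mathcal C))$. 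By the lemma in (ii), $\partial\mathcal C\subseteq\bigcup_{\phi\notin\Psi}(\mathcal X(K)\cap\Gamma^\phi)$ is $\mu$-null, so $\mu({\rm int}(\mathcal C))=\mu(\mathcal C)$. Since $\{p:R({\rm Frob}_p)\in I_U\}\subseteq\{p:R({\rm Frob}_p)\in\mathcal C\}\subseteq\{p:R({\rm Frob}_p)\in T_U\}$ and the outer sets have densities $\mu(I_U)$ and $\mu(T_U)$, letting $U$ shrink forces the density of the middle set to exist and equal $\mu(\mathcal C)=|\Psi|/|\Phi|$.

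The main obstacle is the measure-zero lemma in (ii): promoting the algebro-geometric fact that $Y$ is a proper subvariety to the analytic fact that $Y(K)\cap\Gamma$ is Haar-null, in a form valid when $\Gamma$ is not open in $\mathcal H(K)$, so that one genuinely has to work with $\Gamma$'s own Haar measure rather than the ambient one. The two features that make this possible are that every compact subgroup of $\mathcal G(K)$ is automatically an $\ell$-adic Lie group (hence has an exponential chart) and that an open subgroup of a Zariski-dense subgroup of a connected group is still Zariski-dense (so the restricted function is genuinely nonzero); after that one only needs the standard null-set property of zero loci of nonzero $\ell$-adic analytic functions. The same lemma is also exactly what makes $\partial\mathcal C$ negligible in (i); without it one would obtain only matching-looking upper and lower density bounds rather than a genuine density.
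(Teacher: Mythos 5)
The paper offers no proof of this statement: it is imported verbatim as Theorem~3 of Rajan \cite{raj}, so there is no internal argument to compare yours against. Your proposal is, in substance, Rajan's own proof (which builds on Serre's measure-theoretic form of the Chebotarev density theorem): reduce the density to the normalized Haar measure of $\mathcal X(K)\cap {\rm Im}(R)$ by sandwiching between clopen conjugation-stable sets pulled back from finite quotients, then compute that measure component by component, the decisive input being that a proper Zariski-closed subset of a connected group meets a Zariski-dense compact $\ell$-adic analytic subgroup in a Haar-null set. The outline is correct; the only details worth writing out are that the compact group must be covered by finitely many translates of the small exponential-chart subgroup (with the chosen regular function not identically zero on each translate, which again follows from Zariski density of translates of an open subgroup of a Zariski-dense subgroup of a connected group), and that the surjectivity of ${\rm Im}(R)\to\Phi$ is what guarantees each component of $\mathcal H$ meets ${\rm Im}(R)$ in a coset of measure exactly $1/|\Phi|$.
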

	
	\subsection{$\mathcal{B}$-free numbers}		
	The notion of $\mathcal{B}$-free numbers was first introduced by Erd\"{o}s in \cite{erdos}. These numbers are a certain generalization of squarefree numbers.	
	\begin{defn} 
		Let $\mathcal{B}=\{b_i: 1<b_1<b_2<\cdots\}$ be a sequence of positive integers such that 
		$$
		\sum_{i=1}^{\infty}\frac{1}{b_i}<\infty~~~{and}~~~{\rm gcd}(b_i, b_j)=1~~~{for}~~~i\neq j.
		$$
		A positive integer $n$ is said to be $\mathcal{B}$-free if it is not divisible by $b_i$ for any $i\ge 1$.
	\end{defn}	
	
	Let $\mathcal{A}$ be the set of all $\mathcal{B}$-free numbers, then a result of Erd\"{o}s \cite[Theorem 3]{erdos} states that
	\begin{equation}\label{bfree}
		\#\{n\le x : n\in \mathcal{A}\}\sim \delta x~~~{\rm as}~~~x\rightarrow \infty,
	\end{equation}
	where $\delta=\displaystyle\prod_{b_i\in \mathcal{B}}\left(1-\frac{1}{b_i}\right)$.	
	Note that if we take $\mathcal{B}$ to be the sequence of squares of all primes, then the set of $\mathcal{B}$-free numbers is nothing but the set of all squarefree numbers.

	\section{Technical Results}\label{intermediate}

	\subsection{Distinctness of Satake $p$-parameters}
	We start by recalling a result of Weiss   about the distinctness of Satake $p$-parameters of a non-Saito-Kurokawa Siegel eigenform. For $F\in S_{k}^\perp(\Gamma_2)$ 
	with  Satake $p$-parameters $\{\beta_{i,p}\}_{ 1\le i \le 4}$, defined by \eqref{actual_satake}, he  proved that
	there exists a set of primes of density 1 such that for any prime $p$ in that set, all the four elements $\beta_{i,p}, {1\le i\le 4},$ are distinct \cite[Corollary 5.11]{wei}. Using a similar argument, we prove the following result about the distinctness of Satake $p$-parameters for two Siegel eigenforms.
	
	\begin{thm}\label{satake}
		Let $F\in S_{k_1}^\perp(\Gamma_2)$ and $G\in S_{k_2}^\perp(\Gamma_2)$ be Siegel eigenforms which are not a constant multiple of each other. For a prime $p$, let $\{\beta_{i,p}\}_{ 1\le i \le 4}$ and $\{\delta_{i,p}\}_{ 1\le i \le 4}$ be the Satake $p$-parameters of $F$ and $G$, respectively. Then there exists a set of primes of density $1$ such that for any prime $p$ in that set, all the eight elements 
		$$ 
		\{\beta_{i,p}, \delta_{j,p} : 1\le i,j \le 4\}
		$$
		are distinct.
	\end{thm}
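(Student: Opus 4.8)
The plan is to follow the strategy of Weiss \cite{wei} for a single eigenform, replacing the Galois representation of one form by the product representation $\rho := \rho_{F,\lambda}\times\rho_{G,\lambda}$ and applying the algebraic Chebotarev density theorem (\thmref{rajan_result}) to a subscheme that cuts out the locus where the eight normalized parameters collide. Fix a prime $\ell$, large enough that $\rho_{F,\lambda},\rho_{G,\lambda}$ are valued in ${\rm GSp}_4(\mathcal O_{E_\lambda})$ and \thmref{product_image} applies, and set $\mathcal G = {\rm GSp}_4\times{\rm GSp}_4$ over $K = E_\lambda$. By \rmkref{satake_roots}, the eigenvalues of $\rho_{F,\lambda}({\rm Frob}_p)$ are $p^{k_1-3/2}\beta_{i,p}$, those of $\rho_{G,\lambda}({\rm Frob}_p)$ are $p^{k_2-3/2}\delta_{j,p}$, and the similitudes are ${\rm sim}(\rho_{F,\lambda}({\rm Frob}_p))=p^{2k_1-3}$, ${\rm sim}(\rho_{G,\lambda}({\rm Frob}_p))=p^{2k_2-3}$. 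Hence, writing $\gamma_1,\gamma_2$ for the two ${\rm GSp}_4$-coordinates and $P_{\gamma_i}$ for the characteristic polynomial of $\gamma_i$, the numbers $\beta_{i,p}^2$ (resp.\ $\delta_{j,p}^2$) are the roots of a degree-$4$ polynomial $q_1(X)$ (resp.\ $q_2(X)$) obtained from $P_{\gamma_1}$ (resp.\ $P_{\gamma_2}$) by squaring the roots and dividing by ${\rm sim}(\gamma_1)$ (resp.\ ${\rm sim}(\gamma_2)$); crucially, the coefficients of $q_1,q_2$ are \emph{regular}, conjugation-invariant functions on $\mathcal G$, whereas $p^{k-3/2}$ itself is not, which is exactly why one must pass to squares. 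Now define $\mathcal X\subset\mathcal G$ to be the closed subscheme on which ${\rm disc}\,P_{\gamma_1}=0$, or ${\rm disc}\,P_{\gamma_2}=0$, or ${\rm Res}_X(q_1,q_2)=0$. Since discriminants and resultants of characteristic polynomials are conjugation-invariant, $\mathcal X$ is stable under the adjoint action of $\mathcal G$, and any coincidence $\beta_{i,p}=\beta_{i',p}$ ($i\ne i'$), $\delta_{j,p}=\delta_{j',p}$ ($j\ne j'$), or $\beta_{i,p}=\delta_{j,p}$ forces $\rho({\rm Frob}_p)\in\mathcal X(K)$.

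Next I would determine the Zariski closure $\mathcal H$ of ${\rm Im}(\rho)$ in $\mathcal G(K)$ and its component group. By \thmref{product_image}, ${\rm Im}(\rho)=\{(\gamma_1,\gamma_2):\exists\, v\in\Z_\ell^\times,\ {\rm sim}(\gamma_i)=v^{2k_i-3}\}$. Taking $v=1$ shows this contains ${\rm Sp}_4(\mathcal O_{E_\lambda})\times{\rm Sp}_4(\mathcal O_{E_\lambda})\supseteq{\rm Sp}_4(\Z_\ell)\times{\rm Sp}_4(\Z_\ell)$, which is Zariski dense in ${\rm Sp}_4\times{\rm Sp}_4$; moreover $({\rm sim}\times{\rm sim})({\rm Im}(\rho))=\{(v^{2k_1-3},v^{2k_2-3}):v\in\Z_\ell^\times\}$ has Zariski closure a one-dimensional (hence connected) subtorus $T\subset\mathbb{G}_m\times\mathbb{G}_m$. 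It follows that $\mathcal H$ fits in an exact sequence $1\to{\rm Sp}_4\times{\rm Sp}_4\to\mathcal H\xrightarrow{{\rm sim}\times{\rm sim}}T\to1$ of connected algebraic groups, so $\mathcal H$ is connected and the component group $\Phi$ of \thmref{rajan_result} is trivial.

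Finally, \thmref{rajan_result} gives that the set of primes $p$ with $\rho({\rm Frob}_p)\in\mathcal X(K)\cap{\rm Im}(\rho)$ has density $|\Psi|/|\Phi|$, where $\Psi$ is nonempty only if $\mathcal H=\mathcal H^\circ\subseteq\mathcal X$. Thus it remains to exhibit a single point of $\mathcal H$ outside $\mathcal X$: taking $\gamma_1={\rm diag}(t,u,t^{-1},u^{-1})$ and $\gamma_2={\rm diag}(r,s,r^{-1},s^{-1})$ in ${\rm Sp}_4(\overline K)\subseteq\mathcal H$ with $t,u,r,s$ chosen generically, all eight of $t^{\pm1},u^{\pm1},r^{\pm1},s^{\pm1}$ are distinct and no square of one equals a square of another, so $(\gamma_1,\gamma_2)\notin\mathcal X$. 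Hence $\Psi=\emptyset$, the set of primes with $\rho({\rm Frob}_p)\in\mathcal X$ has density $0$, and — after discarding the finitely many primes $\ell$ for which \thmref{product_image} fails, together with the (finitely many) ramified primes — the set of primes $p$ at which all eight of $\beta_{i,p},\delta_{j,p}$ are distinct has density $1$. The main obstacle is the bookkeeping of the first paragraph: encoding the collisions $\beta_{i,p}=\delta_{j,p}$ as the vanishing of a genuine conjugation-invariant regular function on $\mathcal G$ in spite of the half-integral normalizing factor $p^{k-3/2}$ (resolved by squaring, at the harmless cost of also throwing in the locus $\beta_{i,p}=-\delta_{j,p}$), and verifying that all hypotheses of Rajan's theorem — adjoint-stability of $\mathcal X$ and the precise shape of $\mathcal H$ and $\Phi$ via \thmref{product_image} — are in fact met.
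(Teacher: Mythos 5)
Your proposal is correct and follows essentially the same route as the paper: the product representation $\rho_{F,\lambda}\times\rho_{G,\lambda}$, a conjugation-stable closed subscheme $\mathcal X$ detecting collisions of the \emph{squares} of the normalized parameters (the paper realizes the $\beta_{i,p}^2,\delta_{j,p}^2$ as eigenvalues of ${\rm sim}(\gamma_2)\gamma_1^2$ and ${\rm sim}(\gamma_1)\gamma_2^2$ and takes the discriminant of the product of their characteristic polynomials, where you use ${\rm disc}\,P_{\gamma_1}$, ${\rm disc}\,P_{\gamma_2}$ and a resultant — an equally valid bookkeeping), connectedness of $\mathcal H$ via \thmref{product_image} (the paper's \lemref{dense} identifies $\mathcal H\simeq{\rm GSp}_4\times{\rm Sp}_4$, your extension-by-a-torus argument gives the same conclusion), and Rajan's algebraic Chebotarev theorem with $\Phi$ trivial and $\Psi=\emptyset$. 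No gaps.
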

	
	\begin{proof}
		Let $E$ be the number field generated by the eigenvalues of $F$ and $G$. Let  $\lambda$ be a prime in $E$ above a large rational prime $\ell$ such that the assertion of \thmref{product_image} is true. In particular, both the representations $\rho_{F, \lambda}$ and $\rho_{G, \lambda}$ are valued in  ${\rm GSp}_4(\mathcal O_{E_\lambda})$. We now consider 
		{the} $\lambda$-adic product  Galois representation 
		$$
		R_\lambda:= \rho_{F, \lambda} \times  \rho_{G,\lambda}: {\rm Gal}(\overline{\Q}/\Q) \longrightarrow \mathcal G (\mathcal O_{E_\lambda}), 
		$$
	where $\mathcal G :={\rm GSp}_4\times {\rm GSp}_4$ { is the algebraic group considered over $\overline \Q_\ell$}.
		For any prime $p\neq\ell$, if
		$$
		R_\lambda({\rm Frob}_p)=(\gamma_1, \gamma_2),
		$$
		then by \rmkref{satake_roots}, it is easy to observe that the eigenvalues of ${\rm sim}(\gamma_2)\gamma_1^2$
	and	${\rm sim}(\gamma_1)\gamma_2^2$  are 
		$$
		p^{2(k_1+k_2-3)}\beta_{1,p}^2, \quad 	p^{2(k_1+k_2-3)}\beta_{2,p}^2, \quad 	p^{2(k_1+k_2-3)}\beta_{3,p}^2, \quad 	p^{2(k_1+k_2-3)}\beta_{4,p}^2;
		$$ 
		and
			$$
		p^{2(k_1+k_2-3)}\delta_{1,p}^2, \quad 	p^{2(k_1+k_2-3)}\delta_{2,p}^2, \quad 	p^{2(k_1+k_2-3)}\delta_{3,p}^2, \quad 	p^{2(k_1+k_2-3)}\delta_{4,p}^2,
		$$
		respectively.   Hence in order to complete the proof,  it is sufficient to show that for a set of primes $p$ of density 1, any two elements in the collection of all the eight eigenvalues of ${\rm sim}(\gamma_2)\gamma_1^2$
		and	${\rm sim}(\gamma_1)\gamma_2^2$  are distinct, where 
		$R_\lambda({\rm Frob}_p)=(\gamma_1, \gamma_2).
		$
		
	{ Let $\mathcal X$ be the set of elements $(\gamma_1, \gamma_2)$ of $\mathcal G$  
			such that} at least two eigenvalues  are the same among  all the eight  eigenvalues of ${\rm sim}(\gamma_2)\gamma_1^2$
			and	${\rm sim}(\gamma_1)\gamma_2^2$.
		Then we need to show that the set of primes $p$ such that $R_\lambda(\mathrm{Frob}_p)\in  { \mathcal X({E_\lambda})}\cap {\rm Im}(R_\lambda)$ is of density $0$, { where $\mathcal X({E_\lambda})$ is the set of $E_\lambda$-points of $\mathcal X$}.
		We will prove it by applying \thmref{rajan_result} in this setting and for that, we first need to check certain required properties of $\mathcal X$ and  understand the image of $R_\lambda$.
		
		Clearly, $\mathcal X$ is stable
		under the conjugate action of $\mathcal G$ because the eigenvalues do not depend on conjugacy classes. {Also, $\mathcal{X}$} is a closed subscheme of $\mathcal G$. To see this, consider the polynomial 
		$$f(X) = \mathcal P\left({\rm sim}(\gamma_2)\gamma_1^2\right)(X) \cdot \mathcal P\left({\rm sim}(\gamma_1)\gamma_2^2\right) (X),$$
		 where $ \mathcal P(\gamma)(X)$ denotes the characteristic polynomial of $\gamma \in {\rm GSp}_4$. Then $\mathcal X$ is the vanishing set of the discriminant  of the polynomial $f$. 
		
			Let $\mathcal H:=\overline{{\rm Im}(R_\lambda)}$ {denote} the Zariski closure of  ${\rm Im}(R_\lambda)$. 
		From \thmref{product_image}, it follows that the algebraic group $\mathcal H$ is connected which might be well known to the experts but for the sake of completeness, we have given an idea of its proof in \lemref{dense}.

		Summarizing the above discussions, we have
		\begin{itemize}
			\item
			a representation $R_\lambda: {\rm Gal}(\overline{\Q}/\Q) \rightarrow {\mathcal G({E_\lambda})}$ such that  the Zariski closure of its image (denoted by $\mathcal H$) is a connected {algebraic} group; and
			\item
			a closed subscheme $\mathcal X$ of $\mathcal G$ that is stable under the action of $\mathcal G$ by conjugation.
		\end{itemize}
		We are now ready to apply \thmref{rajan_result}.
		Since $\mathcal H$ is connected, the identity connected component $\mathcal H^\circ$ is equal to $\mathcal H$ and hence the component group $\Phi$ is trivial. Also, it is obvious that  $\mathcal H \not\subset \mathcal X$ which gives $|\Psi|=0$. Applying \thmref{rajan_result}, we deduce that the set of primes $p$ such that $R_\lambda(\mathrm{Frob}_p)\in {\mathcal X({E_\lambda})}\cap {\rm Im}(R_\lambda)$ has density $0$. This completes the proof.
	\end{proof}
	
	Finally, we give a proof of the fact that the algebraic group $\mathcal H$, defined in the proof of the above theorem, is connected. 
	\begin{lem}\label{dense}
		The algebraic group $\mathcal H$ is {homeomorphic} to ${\rm GSp}_4 \times {\rm Sp}_4$. In particular, $\mathcal H$ is connected.
	\end{lem}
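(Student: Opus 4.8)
The plan is to compute $\mathcal{H}$ directly from the explicit description of ${\rm Im}(R_\lambda)$ given by \thmref{product_image}. Write $N:={\rm Sp}_4\times{\rm Sp}_4\subset\mathcal{G}$ and let $\pi:={\rm sim}\times{\rm sim}:\mathcal{G}\to\mathbb{G}_m\times\mathbb{G}_m$, so that $N=\ker\pi$. By \thmref{product_image}, ${\rm Im}(R_\lambda)$ consists of all pairs $(\gamma_1,\gamma_2)\in{\rm GSp}_4(\mathcal{O}_{E_\lambda})^2$ for which ${\rm sim}(\gamma_i)=v^{2k_i-3}$ ($i=1,2$) for a common $v\in\Z_\ell^\times$. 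Taking $v=1$ shows $N(\mathcal{O}_{E_\lambda})={\rm Sp}_4(\mathcal{O}_{E_\lambda})^2\subseteq{\rm Im}(R_\lambda)$, while taking $(\gamma_1,\gamma_2)=\bigl({\rm diag}(v^{2k_1-3},v^{2k_1-3},1,1),\,{\rm diag}(v^{2k_2-3},v^{2k_2-3},1,1)\bigr)$, which has similitudes $(v^{2k_1-3},v^{2k_2-3})$, shows $\pi({\rm Im}(R_\lambda))=\{(v^{2k_1-3},v^{2k_2-3}):v\in\Z_\ell^\times\}$.

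Next I would invoke two density facts. First, ${\rm Sp}_4(\mathcal{O}_{E_\lambda})$ is an open subgroup of ${\rm Sp}_4(E_\lambda)$, and an open subgroup of the $\lambda$-adic points of a connected semisimple group is Zariski dense; since the Zariski closure of a product is the product of the Zariski closures, $N(\mathcal{O}_{E_\lambda})$ is Zariski dense in $N$, whence $N\subseteq\mathcal{H}$. Second, $\Z_\ell^\times$ is infinite, hence Zariski dense in $\mathbb{G}_m$, and the homomorphism $\tau:\mathbb{G}_m\to\mathbb{G}_m^2$, $t\mapsto(t^{2k_1-3},t^{2k_2-3})$, is nonconstant, so its image is a one-dimensional subtorus $T$; therefore $\pi({\rm Im}(R_\lambda))=\tau(\Z_\ell^\times)$ is Zariski dense in $T$. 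Because $\pi$ is a homomorphism of algebraic groups, $\pi(\mathcal{H})$ is a closed subgroup of $\mathbb{G}_m^2$; by continuity it lies in $\overline{\tau(\Z_\ell^\times)}=T$, and it contains the dense subset $\tau(\Z_\ell^\times)$ of $T$, so $\pi(\mathcal{H})=T$. Now set $M:=\pi^{-1}(T)$. From ${\rm Im}(R_\lambda)\subseteq M$ we get $\mathcal{H}\subseteq M$; on the other hand $\mathcal{H}$ contains $N=\ker(\pi|_M)$ and $\pi(\mathcal{H})=T=\pi(M)$, which forces $\mathcal{H}=M$. Since $M$ is an extension of the torus $T$ by the connected group $N$, it is connected.

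It remains to identify $M$ with ${\rm GSp}_4\times{\rm Sp}_4$ as a variety, which yields the asserted homeomorphism. The homomorphism $s:\mathbb{G}_m\to{\rm GSp}_4$, $s(x)={\rm diag}(x,x,1,1)$, satisfies ${\rm sim}(s(x))=x$, so $\gamma\mapsto(\gamma\, s({\rm sim}\,\gamma)^{-1},\,{\rm sim}\,\gamma)$, with inverse $(h,x)\mapsto h\,s(x)$, is an isomorphism of varieties ${\rm GSp}_4\xrightarrow{\sim}{\rm Sp}_4\times\mathbb{G}_m$. Applying it in each factor carries $M=\pi^{-1}(T)$ isomorphically onto ${\rm Sp}_4^2\times T$, and $t\mapsto(t^{a'},t^{b'})$, where $d:=\gcd(2k_1-3,2k_2-3)$, $a':=(2k_1-3)/d$, $b':=(2k_2-3)/d$, is an isomorphism $\mathbb{G}_m\xrightarrow{\sim}T$ (it is injective because $\gcd(a',b')=1$, and its image is all of $T$ since $u\mapsto u^d$ is surjective on $\mathbb{G}_m(\overline{\Q}_\ell)$). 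Hence $M\cong{\rm Sp}_4^2\times\mathbb{G}_m\cong{\rm Sp}_4\times{\rm GSp}_4$ as varieties, and in particular they are homeomorphic. I expect the only non-formal ingredient to be the Zariski density of ${\rm Sp}_4(\mathcal{O}_{E_\lambda})$ in ${\rm Sp}_4$; the remaining point to watch is that the similitudes occurring in ${\rm Im}(R_\lambda)$ fill out the connected torus $T$ rather than the possibly disconnected subgroup of $\mathbb{G}_m^2$ cut out by a single monomial relation, since this is what makes $\pi(\mathcal{H})$ one-dimensional and pins down the extra factor as ${\rm GSp}_4$.
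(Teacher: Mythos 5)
Your proof is correct, and it takes a genuinely different route from the paper's. The paper works at the level of the $\lambda$-adic points: it exhibits ${\rm Im}(R_\lambda)$ as a split extension of ${\rm Sp}_4(\mathcal{O}_{E_\lambda})$ by $A_{\lambda,1}=\{\gamma\in{\rm GSp}_4(\mathcal{O}_{E_\lambda}):{\rm sim}(\gamma)\in(\Z_\ell^\times)^{2k_1-3}\}$, identifies the quotient with ${\rm Sp}_4(\mathcal{O}_{E_\lambda})$ by the explicit coset computation $(\gamma_1,\gamma_2)\mapsto\gamma_2 J(v,-k_2)$, and then closes up factor-by-factor, invoking the Zariski density of $A_{\lambda,1}$ in ${\rm GSp}_4$ (from \cite{kms}) and of ${\rm Sp}_4(\mathcal{O}_{E_\lambda})$ in ${\rm Sp}_4$ (Prasad--Rapinchuk). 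You instead fiber everything over the similitude torus: you show $\mathcal H$ contains $N={\rm Sp}_4\times{\rm Sp}_4$ and that $\pi(\mathcal H)$ is exactly the one-dimensional subtorus $T=\{(v^{2k_1-3},v^{2k_2-3})\}$, so that $\mathcal H=\pi^{-1}(T)$, which is connected as an extension of $T$ by $N$; the identification with ${\rm GSp}_4\times{\rm Sp}_4$ then comes from an explicit variety-splitting ${\rm GSp}_4\cong{\rm Sp}_4\times\mathbb{G}_m$ via the section $s(x)={\rm diag}(x,x,1,1)$. Your version buys two things: it only needs the density of ${\rm Sp}_4(\mathcal{O}_{E_\lambda})$ in ${\rm Sp}_4$ plus the elementary fact that an infinite subgroup of $\mathbb{G}_m$ is Zariski dense (no appeal to \cite{kms} for the density of $A_{\lambda,1}$), and it avoids the somewhat delicate direct-product decomposition of ${\rm Im}(R_\lambda)$, which in the paper is really only a set-theoretic splitting — consistent with the lemma asserting a homeomorphism rather than a group isomorphism. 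You were also right to flag the one genuinely non-formal input (density of ${\rm Sp}_4(\mathcal{O}_{E_\lambda})$) and the point that the similitudes fill out the connected torus $T$; both are handled correctly.
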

	\begin{proof}
		From \thmref{product_image}, recall that
		$$
	{\rm Im}(R_\lambda)=	\{(\gamma_1,\gamma_2) \in  {\rm GSp}_4(\mathcal{O}_{E_\lambda}) \times {\rm GSp}_4(\mathcal{O}_{E_\lambda}) : { \exists ~v \in \Z_\ell^\times}, {\rm sim}(\gamma_i)=v^{2k_i-3}, 1\le i\le 2\}.
		$$
		Let 
		$
		A_{\lambda,1}= \{\gamma \in  {\rm GSp}_4(\mathcal{O}_{E_\lambda}) : {\rm sim}(\gamma) \in {(\Z_\ell^\times)}^{2k_1-3}\}.
		$  
		We now consider the following split short exact sequence:
		$$
		\begin{tikzcd}
			1\arrow{r} & {	A_{\lambda,1}}\arrow{r}{\xi} & {\rm Im}(R_\lambda) \arrow{r}\arrow[bend left=33]{l}{\nu} & \faktor{{\rm Im}(R_\lambda)}{\xi\left(	A_{\lambda,1}\right) }\arrow{r} & 1.
		\end{tikzcd}
		$$
		In the above sequence, for $\gamma_1\in A_{\lambda,1}$ with ${\rm sim}(\gamma_1)=v^{2k_1-3}$, the injective homomorphism $\xi$ is defined by 
		$$
		\xi(\gamma_1)= (\gamma_1,\gamma_2), {\rm~ where}~ \gamma_2=J(v,k_2):=\begin{pmatrix} v^{2k_2-3}{I_2} & {0_2}\\{0_2} & {I_2}\end{pmatrix}
		$$
		and the section $\nu$ is the natural projection. Therefore, we have $ {\rm Im}(R_\lambda) \simeq	A_{\lambda,1} \times \faktor{{\rm Im}(R_\lambda)}{\xi\left(	A_{\lambda,1}\right) }$. From \cite[Proof of Theorem 1.5]{kms}, we know that  ${A_{\lambda,1}}$ is  Zariski dense in ${\rm GSp}_4$ and so by using the fact that {a} product of closed spaces is closed in the product topology and $\mathcal H$ is the Zariski closure of 	${\rm Im}(R_\lambda)$, we obtain
		$$
		\mathcal H \simeq 	{\rm GSp}_4 \times \overline{\left(\faktor{{\rm Im}(R_\lambda)}{\xi\left(	A_{\lambda,1}\right) }\right)}.
		$$
		By \cite[Lemma 3]{prasad}, we know that ${\rm Sp}_4(\mathcal{O}_{E_\lambda})$ is Zariski dense in ${\rm Sp}_4$ and hence to complete the proof it is sufficient to show that 
		$$\faktor{{\rm Im}(R_\lambda)}{\xi\left(	A_{\lambda,1}\right) } \simeq {\rm Sp}_4(\mathcal{O}_{E_\lambda}).$$
		Fix a coset $(\gamma_1,\gamma_2)\xi\left(	A_{\lambda,1}\right)$ in  the quotient group  with ${\rm sim}({\gamma_i})=v^{2k_i-3}$ for some $v\in \Z_\ell^\times$. Then we can write
		$$
		(\gamma_1,\gamma_2)\xi\left(	A_{\lambda,1}\right)= (\gamma_1,\gamma_2)(\gamma_1^{-1}, J(v,-k_2) ) \xi\left(	A_{\lambda,1}\right)= (I_4, \gamma_2J(v,-k_2))\xi\left(	A_{\lambda,1}\right).
		$$
		It is now easy to see that the map $(\gamma_1,\gamma_2)\xi\left(	A_{\lambda,1}\right) \mapsto \gamma_2J(v,-k_2)$ is a well-defined injective homomorphism onto ${\rm Sp}_4(\mathcal{O}_{E_\lambda})$, which completes the proof.
	\end{proof}
	
	
	\subsection{Euler product of the spinor $L$-functions}	
	The aim of this section is to show that the degree of the polynomial, appearing in the Euler product of a certain Dirichlet series with coefficients the product of eigenvalues at prime powers for a fixed prime, is at most 14. This result improves on a result of Gun {\em et al.} \cite[Lemma 15]{gkp}. 
	Though the result of Gun {\em et al.} is sufficient for our purpose, we would like to present the proof of our result since it gives a better result by following a completely different 
	but elementary approach compared to \cite[Lemma 15]{gkp}. We follow the idea presented in the proof of \cite[Lemma 2.7.13]{deitmar}.
	\begin{lem}\label{14}	
		Let $F\in S_{k_1}^\perp(\Gamma_2)$ and $G\in S_{k_2}^\perp(\Gamma_2)$ be Siegel eigenforms with respective $n$-th normalized eigenvalues $\lambda_F(n)$ and $\lambda_G(n)$.  Let $\{\beta_{i,p}\}_{ 1\le i \le 4}$ and $\{\delta_{i,p}\}_{ 1\le i \le 4}$ be the Satake $p$-parameters of $F$ and $G$, respectively. 	Then for any prime $p$ and any $s$ with ${\rm Re}(s)>0$, we have
		$$
		\sum_{r=0}^\infty \frac{\lambda_F(p^r) \lambda_G(p^r)}{p^{rs}}= g_p(p^{-s}) \prod_{1\le i, j\le 4}(1-\beta_{i,p}\delta_{j,p}p^{-s})^{-1},
		$$
		where $g_p(p^{-s})$ is a polynomial in $p^{-s}$ of degree at most $14$.
	\end{lem}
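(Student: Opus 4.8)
The plan is to pass to the local Euler factor at $p$ and reduce everything to an identity of rational functions in $x:=p^{-s}$. First I would extract the $p$-factor from \eqref{mobius1}: since $\lambda_F$ is multiplicative and the local factor of $\zeta(2s+1)^{-1}$ at $p$ is $1-p^{-1}x^{2}$, one gets
$$
A(x):=\sum_{r\ge 0}\lambda_F(p^{r})x^{r}=(1-p^{-1}x^{2})\prod_{i=1}^{4}(1-\beta_{i,p}x)^{-1},
$$
and likewise $B(x):=\sum_{r\ge 0}\lambda_G(p^{r})x^{r}=(1-p^{-1}x^{2})\prod_{j=1}^{4}(1-\delta_{j,p}x)^{-1}$. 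Because the numerator $1-p^{-1}x^{2}$ has degree $2$, strictly below the degree $4$ of the denominator, $A$ and $B$ are \emph{proper} rational functions; hence, in the generic situation where $\beta_{1,p},\dots,\beta_{4,p}$ are distinct, the partial fraction expansion $A(x)=\sum_{i=1}^{4}c_{i}/(1-\beta_{i,p}x)$ yields the closed form $\lambda_F(p^{r})=\sum_{i}c_{i}\beta_{i,p}^{r}$ valid for \emph{all} $r\ge 0$, with no exceptional low-order terms, and similarly $\lambda_G(p^{r})=\sum_{j}d_{j}\delta_{j,p}^{r}$.

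Multiplying the two closed forms gives $\lambda_F(p^{r})\lambda_G(p^{r})=\sum_{i,j}c_{i}d_{j}(\beta_{i,p}\delta_{j,p})^{r}$, so that
$$
\sum_{r\ge 0}\frac{\lambda_F(p^{r})\lambda_G(p^{r})}{p^{rs}}=\sum_{1\le i,j\le 4}\frac{c_{i}d_{j}}{1-\beta_{i,p}\delta_{j,p}p^{-s}}=\frac{g_p(p^{-s})}{\prod_{1\le i,j\le 4}(1-\beta_{i,p}\delta_{j,p}p^{-s})},
$$
where $g_p(x)=\sum_{i,j}c_{i}d_{j}\prod_{(i',j')\ne(i,j)}(1-\beta_{i',p}\delta_{j',p}x)$ is a polynomial of degree at most $15$. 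This already produces the shape of the claimed identity, and the entire point is to sharpen the degree bound from $15$ to $14$.

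For that I would show the coefficient of $x^{15}$ in $g_p$ vanishes. Reading this coefficient off the displayed formula for $g_p$ and using $\prod_{1\le i,j\le 4}\beta_{i,p}\delta_{j,p}=\bigl(\prod_{i}\beta_{i,p}\bigr)^{4}\bigl(\prod_{j}\delta_{j,p}\bigr)^{4}=1$ — which holds because $\beta_{1,p}\beta_{4,p}=\beta_{2,p}\beta_{3,p}=1$ and $\delta_{1,p}\delta_{4,p}=\delta_{2,p}\delta_{3,p}=1$ by \eqref{actual_satake} — one finds this coefficient equals, up to sign, $\bigl(\sum_{i}c_{i}\beta_{i,p}^{-1}\bigr)\bigl(\sum_{j}d_{j}\delta_{j,p}^{-1}\bigr)$. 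Each factor is $0$: expanding $A(x)=\sum_{i}c_{i}/(1-\beta_{i,p}x)$ in descending powers of $x$ gives $A(x)=-\bigl(\sum_{i}c_{i}\beta_{i,p}^{-1}\bigr)x^{-1}+O(x^{-2})$, whereas from $A(x)=(1-p^{-1}x^{2})\big/\prod_{i}(1-\beta_{i,p}x)$, whose denominator is monic of degree $4$ since $\prod_{i}\beta_{i,p}=1$, one reads off $A(x)=-p^{-1}x^{-2}+O(x^{-3})$; comparing the coefficients of $x^{-1}$ forces $\sum_{i}c_{i}\beta_{i,p}^{-1}=0$, and symmetrically $\sum_{j}d_{j}\delta_{j,p}^{-1}=0$. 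Hence $\deg g_p\le 14$. It remains only to discard the genericity hypothesis: the asserted identity is a polynomial relation among the parameters $\beta_{i,p},\delta_{j,p}$ subject to the constraints above, so it propagates from the dense locus of pairwise distinct parameters to all $p$; alternatively, one runs the same computation as a residue expansion of the Hadamard product of the rational functions $A$ and $B$, which handles repeated Satake parameters uniformly and again gives a numerator of degree at most $2+12=14$.

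The main obstacle is precisely this passage from the cheap bound $15$ to the sharp bound $14$: one must recognise that the discrepancy is measured by the quantity $\sum_{i}c_{i}\beta_{i,p}^{-1}$, and that this vanishes because the numerator $1-p^{-1}x^{2}$ of the local factor of $\sum_{n}\lambda_F(n)n^{-s}$ — coming from the $\zeta(2s+1)^{-1}$ in \eqref{mobius1} — has degree exactly two less than that of the denominator, together with the relation $\prod_{i}\beta_{i,p}=1$. Everything else is routine manipulation of rational functions, plus the minor bookkeeping needed to cover coincident parameters.
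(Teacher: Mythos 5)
Your argument is correct, and it arrives at the bound $14$ by a genuinely different route from the paper. The paper evaluates the diagonal (Hadamard) product of the two local factors by the contour integral $\frac{1}{2\pi i}\int_C \Phi_F(p^{-s}z)\,\Phi_G(1/z)\frac{dz}{z}$ over a circle of radius between $1$ and $2^{\sigma_0}$, and applies Cauchy's residue theorem at the four poles $z=\delta_{j,p}$; each residue is a ratio of a degree-$2$ numerator to a degree-$4$ denominator in $p^{-s}$, so clearing denominators yields degree $2+12=14$ with no further work. You instead use partial fractions to get the exact closed forms $\lambda_F(p^r)=\sum_i c_i\beta_{i,p}^r$ and $\lambda_G(p^r)=\sum_j d_j\delta_{j,p}^r$ (valid for all $r\ge 0$ because the local factors are proper rational functions), multiply and resum; this gives the soft bound $15$, which you then sharpen to $14$ by identifying the degree-$15$ coefficient with $-\bigl(\prod_{i,j}\beta_{i,p}\delta_{j,p}\bigr)\bigl(\sum_i c_i\beta_{i,p}^{-1}\bigr)\bigl(\sum_j d_j\delta_{j,p}^{-1}\bigr)$ and showing each inner sum vanishes from the behaviour of the local factor at infinity. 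Both computations are sound; what the residue formula buys is that the bound $14$ and the case of coincident Satake parameters come for free, whereas your route needs the extra vanishing argument and the density/specialization step (your suggested fallback via a residue expansion of the Hadamard product is, in effect, the paper's proof). Two minor remarks: the relation $\prod_i\beta_{i,p}=1$ is not actually needed for the vanishing of the top coefficient --- only the fact that the numerator $1-p^{-1}x^2$ has degree two less than the denominator matters --- and the absolute convergence for ${\rm Re}(s)>0$ rests on $|\beta_{i,p}|=|\delta_{j,p}|=1$ (Weissauer's Ramanujan bound for non-Saito--Kurokawa forms), which you use implicitly when summing the geometric series.
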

	\begin{proof}
		By using \eqref{mobius1} and \eqref{raman}, we deduce that for each prime $p$ and ${\rm Re}(s)>0$, we have
		\begin{equation*}\label{taylor1}
			\Phi_F(p^{-s}):= \sum_{r=0}^\infty\frac{\lambda_F(p^r)}{p^{rs}}=\left(1-\frac{1}{p^{2s+1}}\right)\prod_{1\le i \le 4}(1-\beta_{i,p}p^{-s})^{-1},
		\end{equation*}
		and
		\begin{equation*}\label{taylor2}
			\Phi_G(p^{-s}):= \sum_{r=0}^\infty\frac{\lambda_G(p^r)}{p^{rs}}=\left(1-\frac{1}{p^{2s+1}}\right)\prod_{1\le i \le 4}(1-\delta_{i,p}p^{-s})^{-1}.
		\end{equation*}
		Let $s_0=\sigma_0+it_0$ be any complex number such that $\sigma_0>0$.
		Let $C$ be the circle centred at $0$ of radius bigger than $1$ but strictly less than $2^{\sigma_0}$. Now consider the integral
		\begin{equation}\label{cauchy}
			\frac{1}{2\pi i}\int_C \Phi_F(p^{-s_0}z) \Phi_G(1/z)\frac{dz}{z}.
		\end{equation}
		Using the above series form of $\Phi_F$ and $\Phi_G$, 
		we deduce that the above integral \eqref{cauchy} is equal to
		\begin{equation}\label{one}
			\sum_{r, r'=0}^\infty \lambda_F(p^r) \lambda_G(p^{r'})p^{-s_0r}\frac{1}{2\pi i} \int_C z^{r-r'-1}dz
			=\sum_{r=0}^\infty \lambda_F(p^r) \lambda_G(p^r)p^{-s_0r}.
		\end{equation}
		By using the Euler product form of $\Phi_F$ and $\Phi_G$,
		the integral \eqref{cauchy} is equal to 
		\begin{equation*}\label{another}
			\frac{1}{2\pi i}\int_C \frac{\left(1-\frac{1}{p}p^{-2s_0}z^2\right)\left(1-\frac{1}{pz^2}\right)}
			{\displaystyle\prod_{1\le i\le 4}(1-\beta_{i,p}p^{-s_0}z)\prod_{1\le i \le 4}\left(1-\frac{\delta_{i,p}}{z}\right)}\frac{dz}{z}.
		\end{equation*}
		Since $|\beta_{i, p}|=|\delta_{i, p}|=1$ for $1\le i\le 4$, the singularities of the integrand of the above integral which are inside $C$ are the poles at $z=\delta_{i, p}$ for $1\le i\le 4$. Applying Cauchy's residue theorem, we deduce that
		\begin{equation}\label{final}
			\frac{1}{2\pi i}\int_C \Phi_F(p^{-s_0}z) \Phi_G(1/z)\frac{dz}{z}
			=g_p(p^{-s_0}) \prod_{1\le i, j\le 4}(1-\beta_{i,p}\delta_{j,p}p^{-s_0})^{-1},
		\end{equation}
		where $g_p(p^{-s_0})$ is a polynomial in $p^{-s_0}$ of degree at most $14$.
		Therefore from \eqref{one} and \eqref{final}, we conclude the result.
	\end{proof}

	\section{Proof of \thmref{prime_power}}\label{p_prime_power}
	
	For a prime $p$, let $\{\beta_{i,p}\}_{ 1\le i \le 4}$ and $\{\delta_{i,p}\}_{ 1\le i \le 4}$ be the respective Satake $p$-parameters of $F$ and $G$, defined by \eqref{actual_satake}. Let $\mathcal P$ be the set of primes $p$ such that any two elements in the set 
	$$ 
	\{\beta_{i,p}, \delta_{j,p} : 1\le i,j \le 4\}
	$$
	are distinct. From \thmref{satake}, we know that the set of primes $\mathcal P$  is of density 1. To prove \thmref{prime_power}, we show that for any $p\in \mathcal P$,  the sequence 
	$\{\lambda_F(p^r)\lambda_G(p^r)\}_{r\ge 0}$ changes sign infinitely often.
	
	We now fix a prime $p\in \mathcal P$ and on the contrary, we assume that the sequence 
	$\{\lambda_F(p^r)\lambda_G(p^r)\}_{r\ge 0}$ has all but finitely many  terms non-negative. 	Since $F$ and $G$ are non-Saito-Kurokawa lifts, by using \eqref{raman} we see that the Dirichlet series
	\begin{equation}\label{landau}
		\sum_{r=0}^\infty \frac{\lambda_F(p^r) \lambda_G(p^r)}{p^{rs}}
	\end{equation}
	converges absolutely for ${\rm Re}(s)>0$.
	Also, from \lemref{14}, we know that
	\begin{equation}\label{quotient}
		\sum_{r=0}^\infty \frac{\lambda_F(p^r) \lambda_G(p^r)}{p^{rs}}= g_p(p^{-s}) \prod_{1\le i, j\le 4}(1-\beta_{i,p}\delta_{j,p}p^{-s})^{-1},
	\end{equation}
	where $g_p(p^{-s})$ is a polynomial in $p^{-s}$ of degree at most $14$. 
	Since $|\beta_{i,p}|=1=|\delta_{j,p}|$ for all $1\le i, j\le 4$, the function 
	$$
	\prod_{1\le i, j\le 4}(1-\beta_{i,p}\delta_{j,p}p^{-s})^{-1}
	$$
	has 16 singularities on the line ${\rm Re}(s)=0$. On the other hand, as a polynomial in $p^{-s}$, $g_p(p^{-s})$ is of degree at most $14$ showing that the function on the right of \eqref{quotient} and hence the Dirichlet series \eqref{landau} has at least two singularities on the line ${\rm Re}(s)=0$. Therefore the abscissa of absolute convergence of the Dirichlet series \eqref{landau} is $0$. Thus by Landau's theorem {\cite[Theorem 11.13]{Apostol}} on Dirichlet series with non-negative coefficients, we deduce that the Dirichlet series \eqref{landau} has a singularity at $s=0$ and thus the series on the right of \eqref{quotient} has a singularity at $s=0$. It follows that $
	\prod_{1\le i, j\le 4}(1-\beta_{i,p}\delta_{j,p})=0$ and hence
	$$
	\beta_{i,p}=\delta_{j,p}^{-1} \quad \text{for ~some}\quad i,j.
	$$
	But we know that  $\delta_{1, p}^{-1}=\delta_{4, p}$,
	$\delta_{2, p}^{-1}=\delta_{3, p}$ and hence we arrive at a contradiction because of the  choice of our prime  $p$.
	
	{\section{Proof of \thmref{thm3}}\label{p_thm3}
		Consider the set
		$$\mathcal{F}:=\{n\in \mathbb{N} : \lambda_F(n)\lambda_G(n)<0\}.$$ 
		Suppose $F$ is not a scalar multiple of $G$.	To prove the theorem, we show that
		$$
		\#\{n\le x: n\in \mathcal{F}\}\gg x,	
		$$
{ that is, there exists an absolute constant $c > 0$ such that $\#\{n\le x: n\in \mathcal{F}\}\ge c ~x$ for all sufficiently large $x$}. 
By \thmref{prime_power}, there exists a prime $p_0$ and an integer $t\ge 1$ such that $\lambda_F(p_0^t)\lambda_G(p_0^t)<0$. Define
		$$
		\mathcal{B}:=\{p_0\}\cup \{p:  p\neq p_0 {\rm ~and~} \lambda_F(p) \lambda_G(p)=0  \}
		\cup \{p^2:  p\neq p_0 {\rm ~and~} \lambda_F(p) \lambda_G(p)\neq 0 \},
		$$
		and let $\mathcal{A}$ be the set of all $\mathcal{B}$-free numbers. Then any element $n$ of $\mathcal{A}$ is a squarefree positive integer satisfying $\lambda_F(n)\lambda_G(n)\neq 0$.
		We now claim that
		\begin{equation*}\label{finite}
			\sum_{b\in \mathcal{B}}\frac{1}{b}<\infty.
		\end{equation*}
		Since $\displaystyle{\sum_{p}\frac{1}{p^2}<\infty}$, to prove the above claim it is sufficient to show that
		$
		\displaystyle{\sum_{ \lambda_F(p)\lambda_G(p)=0}\frac{1}{p}<\infty}.
		$
		But
		$$
		\sum_{ \lambda_F(p)\lambda_G(p)=0}\frac{1}{p}\le
		\sum_{\lambda_F(p)=0}\frac{1}{p}+ \sum_{\lambda_G(p)=0}\frac{1}{p}.
		$$ 
		We know from \cite[Theorem 4]{rsw} that  there exists a $\delta>0$ such that
		$$
		\#\{p\le x: \lambda_F(p)=0\}\ll \frac{x}{(\log{x})^{1+\delta}}.
		$$
		Using the above result, integration by parts yields 
		$$
		\sum_{p\le x \atop \lambda_F(p)=0}\frac{1}{p}=
		\int_{2^{-}}^x\frac{1}{u}d\left(\sum_{p\le u \atop \lambda_F(p)=0} 1\right)\ll 1+\int_{2}^x \frac{du}{u(\log{u})^{1+\delta}}\ll 1.$$
		Similarly, we have
		$$
		\sum_{p\le x \atop \lambda_G(p)=0}\frac{1}{p}\ll 1.
		$$
		This proves our claim.

		Note {that the} set $\mathcal{A}$ can be written as a disjoint union of the sets $\mathcal{A}_1$ and $\mathcal{A}_2$ 
		defined by
		$$
		\mathcal{A}_1=\{n\in \mathcal{A}: \lambda_F(n)\lambda_G(n)<0\}, \quad 
		\mathcal{A}_2=\{n\in \mathcal{A}: \lambda_F(n)\lambda_G(n)>0\}.
		$$	
		If $n\in \mathcal{A}_2$, then $p_0^tn\in \mathcal{F}$. Therefore 
		$\mathcal{F}\supseteq\mathcal{A}_1\cup p_0^t \mathcal{A}_2$ and  hence we have
		$$
		\#\{n\le x: n\in \mathcal{F}\}\ge \#\{n\le x: n\in \mathcal{A}_1 \cup p_0^t\mathcal{A}_2\}
		\ge  \#\{n\le x/p_0^t: n\in \mathcal{A}\}.
		$$
		Next by using \eqref{bfree} { 
		we deduce that $\#\{n\le x/p_0^t: n\in \mathcal{A}\} \sim \delta ~x/p_0^t$, for a constant 
		$\delta > 0$. Therefore for sufficiently large $x$, we have 
		$$\frac{\#\{n\le x/p_0^t: n\in \mathcal{A}\}}{x} \ge c,$$
		where $c$ is a positive constant. This in turn implies that}
		$
		\#\{n\le x: n\in \mathcal{F}\}\gg x.\\	
		$
		
		\bigskip
		
		\begin{acknowledgements}
			The first author would like to thank Ariel Weiss and Shaul Zemel for several useful discussions and for sharing their ideas with him. The research of the first author was partially supported by the grant no. 692854 provided by the European Research Council (ERC).	
			The research of the second and the third authors was partially supported by the DST-SERB grants CRG/2020/004147 and ECR/2016/001359, respectively. We would like to thank the anonymous referees for their careful reading and some useful suggestions which improved the presentation of the article.
		\end{acknowledgements}

	\end{document}